%
%
\documentclass[fleqn,12pt,twoside]{article}


\usepackage[headings]{espcrc1}
\readRCS
$Id: espcrc1.tex,v 1.2 2004/02/24 11:22:11 spepping Exp $
\ProvidesFile{espcrc1.tex}[\filedate \space v\fileversion
     \space Elsevier 1-column CRC Author Instructions]


\usepackage{graphicx}
\usepackage[figuresright]{rotating}

\newtheorem{theorem}{Theorem}

\newtheorem{corollary}[theorem]{Corollary}

\newenvironment{proof}[1][Proof.]{\begin{trivlist}
\item[\hskip \labelsep {\bfseries #1}]}{\end{trivlist}}

\newenvironment{acknowledgement}[1][Acknowledgement]{\begin{trivlist}
\item[\hskip \labelsep {\bfseries #1}]}{\end{trivlist}}

\newcommand{\AmS}{{\protect\the\textfont2
  A\kern-.1667em\lower.5ex\hbox{M}\kern-.125emS}}

\hyphenation{author another created financial paper re-commend-ed Post-Script}

\usepackage{amsmath}
\usepackage{amsfonts}
\title{On sum edge-coloring of regular, bipartite and split graphs}

\author{P.A. Petrosyan\address[MCSD]{Institute for Informatics and Automation Problems,\\
National Academy of Sciences, 0014, Armenia}%
\address{Department of Informatics and Applied Mathematics,\\
Yerevan State University, 0025, Armenia}%
\thanks {email: pet\_petros@\{ipia.sci.am, ysu.am, yahoo.com\}},
        R.R. Kamalian\addressmark[MCSD]%
\address{Department of Applied Mathematics and Informatics,\\
Russian-Armenian State University, 0051, Armenia}%
\thanks{email: rrkamalian@yahoo.com.}}


\runtitle{On sum edge-coloring of regular, bipartite and split
graphs}\runauthor{P.A. Petrosyan, R.R. Kamalian}

\begin{document}

\maketitle

\begin{abstract}
An edge-coloring of a graph $G$ with natural numbers is called a sum
edge-coloring if the colors of edges incident to any vertex of $G$
are distinct and the sum of the colors of the edges of $G$ is
minimum. The edge-chromatic sum of a graph $G$ is the sum of the
colors of edges in a sum edge-coloring of $G$. It is known that the
problem of finding the edge-chromatic sum of an $r$-regular ($r\geq
3$) graph is $NP$-complete. In this paper we give a polynomial time
$\left(1+\frac{2r}{(r+1)^{2}}\right)$-approximation algorithm for
the edge-chromatic sum problem on $r$-regular graphs for $r\geq 3$.
Also, it is known that the problem of finding the edge-chromatic sum
of bipartite graphs with maximum degree $3$ is $NP$-complete. We
show that the problem remains $NP$-complete even for some restricted
class of bipartite graphs with maximum degree $3$. Finally, we give
upper bounds for the edge-chromatic sum of some split graphs. \\

Keywords: edge-coloring, sum edge-coloring, regular graph, bipartite
graph, split graph

\end{abstract}

\section{Introduction}\

We consider finite undirected graphs that do not contain loops or
multiple edges. Let $V(G)$ and $E(G)$ denote sets of vertices and
edges of $G$, respectively. For $S\subseteq V(G)$, let $G[S]$ denote
the subgraph of $G$ induced by $S$, that is, $V(G[S])=S$ and
$E(G[S])$ consists of those edges of $E(G)$ for which both ends are
in $S$. The degree of a vertex $v\in V(G)$ is denoted by $d_{G}(v)$,
the maximum degree of $G$ by $\Delta(G)$, the chromatic number of
$G$ by $\chi(G)$, and the chromatic index of $G$ by
$\chi^{\prime}(G)$. The terms and concepts that we do not define can
be found in \cite{b4,b26}.

A proper vertex-coloring of a graph $G$ is a mapping $\alpha:
V(G)\rightarrow \mathbf{N}$ such that $\alpha(u)\neq \alpha(v)$ for
every $uv\in E(G)$. If $\alpha$ is a proper vertex-coloring of a
graph $G$, then $\Sigma(G,\alpha)$ denotes the sum of the colors of
the vertices of $G$. For a graph $G$, define the vertex-chromatic
sum $\Sigma(G)$ as follows:
$\Sigma(G)=\min_{\alpha}\Sigma(G,\alpha)$, where minimum is taken
among all possible proper vertex-colorings of $G$. If $\alpha$ is a
proper vertex-coloring of a graph $G$ and
$\Sigma(G)=\Sigma(G,\alpha)$, then $\alpha$ is called a sum
vertex-coloring. The strength of a graph $G$ ($s(G)$) is the minimum
number of colors needed for a sum vertex-coloring of $G$. The
concept of sum vertex-coloring and vertex-chromatic sum was
introduced by Kubicka \cite{b16}  and Supowit \cite{b22}. In
\cite{b17}, Kubicka and Schwenk showed that the problem of finding
the vertex-chromatic sum is $NP$-complete in general and polynomial
time solvable for trees. Jansen \cite{b12} gave a dynamic
programming algorithm for partial $k$-trees. In papers
\cite{b5,b6,b10,b13,b18}, some approximation algorithms were given
for various classes of graphs. For the strength of graphs,
Brook's-type theorem was proved in \cite{b11}. On the other hand,
there are graphs with $s(G)>\chi(G)$ \cite{b8}. Some bounds for the
vertex-chromatic sum of a graph were given in \cite{b23}.

Similar to the sum vertex-coloring and vertex-chromatic sum of
graphs, in \cite{b5,b9,b11}, sum edge-coloring and edge-chromatic
sum of graphs was introduced. A proper edge-coloring of a graph $G$
is a mapping $\alpha: E(G)\rightarrow \mathbf{N}$ such that
$\alpha(e)\neq \alpha(e^{\prime})$ for every pair of adjacent edges
$e,e^{\prime}\in E(G)$. If $\alpha$ is a proper edge-coloring of a
graph $G$, then $\Sigma^{\prime}(G,\alpha)$ denotes the sum of the
colors of the edges of $G$. For a graph $G$, define the
edge-chromatic sum $\Sigma^{\prime}(G)$ as follows:
$\Sigma^{\prime}(G)=\min_{\alpha}\Sigma^{\prime}(G,\alpha)$, where
minimum is taken among all possible proper edge-colorings of $G$. If
$\alpha$ is a proper edge-coloring of a graph $G$ and
$\Sigma^{\prime}(G)=\Sigma^{\prime}(G,\alpha)$, then $\alpha$ is
called a sum edge-coloring. The edge-strength of a graph $G$
($s^{\prime}(G)$) is the minimum number of colors needed for a sum
edge-coloring of $G$. For the edge-strength of graphs, Vizing's-type
theorem was proved in \cite{b11}. In \cite{b5}, Bar-Noy et al.
proved that the problem of finding the edge-chromatic sum is
$NP$-hard for multigraphs. Later, in \cite{b9}, it was shown that
the problem is $NP$-complete for bipartite graphs with maximum
degree $3$. Also, in \cite{b9}, the authors proved that the problem
can be solved in polynomial time for trees and that
$s^{\prime}(G)=\chi^{\prime}(G)$ for bipartite graphs. In
\cite{b20}, Salavatipour proved that determining the edge-chromatic
sum and the edge-strength are $NP$-complete for $r$-regular graphs
with $r\geq 3$. Also he proved that $s^{\prime}(G)=\chi^{\prime}(G)$
for regular graphs. On the other hand, there are graphs with
$\chi^{\prime}(G)=\Delta(G)$ and $s^{\prime}(G)=\Delta(G)+1$
\cite{b11}. Recently, Cardinal et al. \cite{b7} determined the
edge-strength of the multicycles.

In the present paper we give a polynomial time
$\frac{11}{8}$-approximation algorithm for the edge-chromatic sum
problem of $r$-regular graphs for $r\geq 3$. Next, we show that the
problem of finding the edge-chromatic sum remains $NP$-complete even
for some restricted class of bipartite graphs with maximum degree
$3$. Finally, we give upper bounds for the edge-chromatic sum of
some split graphs.
\bigskip

\section{Definitions and necessary results}\

A proper $t$-coloring is a proper edge-coloring which makes use of
$t$ different colors. If $\alpha $ is a proper $t$-coloring of $G$
and $v\in V(G)$, then $S\left(v,\alpha\right)$ denotes set of colors
appearing on edges incident to $v$. Let $G$ be a graph and
$R\subseteq V(G)$. A proper $t$-coloring of a graph $G$ is called an
$R$-sequential $t$-coloring \cite{b1,b2} if the edges incident to
each vertex $v\in R$ are colored by the colors $1,\ldots,d_{G}(v)$.
For positive integers $a$ and $b$, we denote by $\left[a,b\right]$,
the set of all positive integers $c$ with $a\leq c\leq b$. For a
positive integer $n$, let $K_{n}$ denote the complete graph on $n$
vertices.

We will use the following four results.

\begin{theorem}
\label{mytheorem0} \cite{b15}. If $G$ is a bipartite graph, then
$\chi^{\prime }(G)=\Delta(G)$.
\end{theorem}

\begin{theorem}
\label{mytheorem1} \cite{b24}. For every graph $G$,
\begin{center}
$\Delta(G)\leq \chi^{\prime }(G)\leq \Delta(G)+1$.
\end{center}
\end{theorem}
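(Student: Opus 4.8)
The lower bound requires no work: if $v$ is a vertex with $d_G(v)=\Delta(G)$, its $\Delta(G)$ incident edges are pairwise adjacent and hence receive pairwise distinct colors in any proper edge-coloring, so $\chi^{\prime}(G)\ge\Delta(G)$.

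For the upper bound I would argue by induction on $|E(G)|$, the case $|E(G)|=0$ being vacuous. Write $\Delta=\Delta(G)$ and fix a palette of $\Delta+1$ colors. Delete an edge $uv_1$; since $G-uv_1$ has maximum degree at most $\Delta$, the induction hypothesis gives a proper edge-coloring $c$ of $G-uv_1$ with these $\Delta+1$ colors. Because every vertex has degree at most $\Delta<\Delta+1$, at each vertex $w$ at least one color is absent from the edges at $w$; call such a color \emph{free} at $w$. The entire content of the proof is to recolor some already-colored edges so that, afterwards, some single color is free at both $u$ and $v_1$ — for then that color may be assigned to $uv_1$, completing the coloring of $G$.

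The tool is Vizing's \emph{fan}. Starting from the uncolored edge $uv_1$, greedily build a longest sequence $v_1,v_2,\dots,v_s$ of distinct neighbours of $u$ such that, for every $i<s$, the color $c(uv_{i+1})$ is free at $v_i$. Pick a color $\alpha$ free at $u$ and a color $\beta$ free at $v_s$. If $\beta$ is also free at $u$, then ``rotating'' the fan — reassigning $c(uv_i):=c(uv_{i+1})$ for $i=1,\dots,s-1$ and $c(uv_s):=\beta$ — is readily checked to remain a proper $(\Delta+1)$-edge-coloring, and we are done. Otherwise $\beta$ is used at $u$, on a unique edge $uv_j$; by maximality of the fan one shows $v_j$ must itself be a fan vertex with $2\le j\le s-1$. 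Now consider the maximal path $P$ whose edges alternate between colors $\alpha$ and $\beta$ and which starts at $u$ (it leaves $u$ along $uv_j$, since $\alpha$ is free at $u$). Interchanging $\alpha$ and $\beta$ along $P$ keeps the coloring proper; a case analysis on where $P$ terminates — at $v_s$, at $v_{j-1}$, or at some other vertex — shows that after the swap one can pass to an appropriate initial segment of the fan that is still a fan and whose last vertex now has a free color that is also free at $u$, i.e.\ we are reduced to the ``rotation'' case already handled.

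The main obstacle is precisely this final case analysis: one must carefully track which colors are free at $v_{j-1}$, at $v_s$, and at $u$ before and after the Kempe-chain interchange, and invoke the maximality of the fan to exclude every configuration in which no valid rotation survives. Everything else — the lower bound, the inductive scaffolding, and the verification that a single fan rotation yields a proper coloring — is routine bookkeeping.
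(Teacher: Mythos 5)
The paper does not prove this statement at all: it is Vizing's theorem, quoted with a citation to \cite{b24}, so there is no in-paper argument to compare yours against; the relevant benchmark is the classical proof, and your outline is exactly that classical route (induction on $|E(G)|$, a maximal Vizing fan at $u$, fan rotation, and an $\alpha\beta$ Kempe-chain interchange). The preliminary steps you do spell out are sound: the lower bound, the existence of a free color at every vertex, the correctness of a fan rotation when some color is free at both $u$ and $v_s$, and the maximality argument forcing the $\beta$-edge at $u$ to go to a fan vertex $v_j$ with $2\le j\le s-1$ are all right.

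The gap is the one you name yourself: the terminal case analysis after the Kempe-chain swap is asserted, not carried out, and that analysis is the entire mathematical content of the theorem — everything before it is bookkeeping. Specifically, you must fix which chain is interchanged (the maximal $\alpha\beta$-path through $u$, which leaves $u$ along $uv_j$ since $\alpha$ is free at $u$), and then verify in each case where the path ends: if it does not reach $v_{j-1}$, then after the swap $\beta$ (equivalently, the relevant free color) is free at both $u$ and $v_{j-1}$, so rotating the truncated fan $v_1,\dots,v_{j-1}$ finishes; if it does not reach $v_s$, then after the swap one rotates the full fan $v_1,\dots,v_s$ and colors $uv_s$ with $\beta$; and one must check the path cannot terminate at both, and that the swap does not destroy the fan property of the prefix being rotated (the colors $c(uv_2),\dots,c(uv_{j-1})$ are neither $\alpha$ nor $\beta$, so they are untouched). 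Without these verifications the proof is a correct skeleton of Vizing's argument rather than a proof; with them written out, it would be the standard complete proof of the cited theorem.
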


\begin{theorem}
\label{mytheorem2}\cite{b25}. For the complete graph $K_{n}$ with
$n\geq 2$,
\begin{center}
$\chi^{\prime}(K_{n})=\left\{
\begin{tabular}{ll}
$n-1$, & if $n$ is even, \\
$n$, & if $n$ is odd. \\
\end{tabular}%
\right.$
\end{center}
\end{theorem}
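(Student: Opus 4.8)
The plan is to prove the equality by establishing matching lower and upper bounds on $\chi^{\prime}(K_n)$, treating the two parities of $n$ separately. For the lower bound I would start from Theorem~\ref{mytheorem1}, which gives $\chi^{\prime}(K_n)\ge\Delta(K_n)=n-1$ for every $n$. When $n$ is odd this does not yet suffice, so I would add the standard counting bound: in any proper edge-coloring each color class is a matching, a matching in an $n$-vertex graph has at most $\lfloor n/2\rfloor$ edges, and for odd $n$ this equals $\frac{n-1}{2}$; since $|E(K_n)|=\binom{n}{2}=\frac{n(n-1)}{2}$, at least $\frac{n(n-1)/2}{(n-1)/2}=n$ colors are needed, so $\chi^{\prime}(K_n)\ge n$ when $n$ is odd.

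For the upper bound when $n$ is odd nothing more is required: Theorem~\ref{mytheorem1} gives $\chi^{\prime}(K_n)\le\Delta(K_n)+1=n$, which already matches the lower bound. (If one prefers an explicit coloring, take a proper $n$-edge-coloring of $K_{n+1}$, whose order is even, and delete one vertex; each color class restricts to a near-perfect matching of $K_n$.) The remaining, and main, task is the even case: I must show $\chi^{\prime}(K_n)\le n-1$, that is, produce a proper edge-coloring of $K_n$ with exactly $n-1$ colors. Since $K_n$ is $(n-1)$-regular on $n$ vertices, such a coloring is exactly a partition of $E(K_n)$ into $n-1$ perfect matchings, i.e.\ a $1$-factorization of $K_n$.

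I would supply this $1$-factorization by the classical rotation construction. Write $n=2m$, label the vertices $v_0,v_1,\dots,v_{2m-1}$, and for each color $i\in[1,2m-1]$ put
\[
M_i=\{\,v_0v_i\,\}\cup\{\,v_{i+k}v_{i-k}\ :\ 1\le k\le m-1\,\},
\]
where every subscript other than $0$ is read modulo $2m-1$ in the range $\{1,\dots,2m-1\}$. The verification splits into two checks. First, each $M_i$ is a perfect matching: $v_0$ is matched to $v_i$, and among $v_1,\dots,v_{2m-1}$ the $m-1$ pairs $\{i+k,i-k\}$, $1\le k\le m-1$, are disjoint and cover exactly the $2m-2$ indices different from $i$. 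Second, the $n-1$ matchings are pairwise edge-disjoint and together cover all $\binom{n}{2}$ edges; since they have the right total size $(2m-1)m=\binom{2m}{2}$, it suffices to see that each edge lies in at least one class, and this is where the hypothesis that $n$ is even enters: an edge $v_0v_a$ lies in $M_a$, while an edge $v_av_b$ with $a,b\ge1$ lies in $M_i$ precisely when $2i\equiv a+b\pmod{2m-1}$, a congruence solvable for $i$ because $\gcd(2,2m-1)=1$. Keeping the modular index arithmetic straight is the only delicate point; the rest is routine bookkeeping.
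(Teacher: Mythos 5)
Your proof is correct. The paper itself offers no argument for this statement: it is quoted as a known classical result with a citation to Vizing, so there is no internal proof to compare against. Your self-contained treatment is the standard one and all the steps check out: the lower bound $\chi^{\prime}(K_n)\ge n-1$ is immediate from the degree (or Theorem~\ref{mytheorem1}), the matching-size count $\lfloor n/2\rfloor=\frac{n-1}{2}$ for odd $n$ correctly forces $\ge n$ colors, and Theorem~\ref{mytheorem1} closes the odd case from above. For even $n=2m$ your rotation construction is the classical $1$-factorization: the pairs $\{i+k,i-k\}$, $1\le k\le m-1$, are indeed distinct and avoid $i$ modulo $2m-1$ (since $k+k'\equiv 0$ is impossible in the given range), so each $M_i$ is a perfect matching, and the counting argument $(2m-1)m=\binom{2m}{2}$ combined with the solvability of $2i\equiv a+b \pmod{2m-1}$ (using $\gcd(2,2m-1)=1$) correctly upgrades ``every edge covered'' to ``partition into $n-1$ perfect matchings,'' i.e.\ a proper $(n-1)$-edge-coloring. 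The only stylistic remark is that your parenthetical construction for odd $n$ (delete a vertex from a colored $K_{n+1}$) logically depends on the even case, so it should be presented after it, but since you also invoke Theorem~\ref{mytheorem1} for that bound, nothing is missing.
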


\begin{theorem}
\label{mytheorem3} \cite{b9,b11}. If $G$ is a bipartite or a regular
graph, then $s^{\prime}(G)=\chi^{\prime}(G)$.
\end{theorem}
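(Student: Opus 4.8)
Every proper edge-coloring of $G$ uses at least $\chi^{\prime}(G)$ colors, so $s^{\prime}(G)\ge\chi^{\prime}(G)$, and it suffices to produce one \emph{sum} edge-coloring of $G$ that uses only $\chi^{\prime}(G)$ colors. The starting point is a lower bound for $\Sigma^{\prime}(G)$. If $\alpha$ is a proper edge-coloring of $G$, then writing each $\alpha(e)$ as the number of integers $j$ with $1\le j\le\alpha(e)$ and summing over $E(G)$ gives
\[
\Sigma^{\prime}(G,\alpha)=\sum_{j\ge 1}\bigl|\{e\in E(G):\alpha(e)\ge j\}\bigr|=\sum_{i\ge 0}\Bigl(|E(G)|-\bigl|\{e\in E(G):\alpha(e)\le i\}\bigr|\Bigr).
\]
For each $i$ the edges with $\alpha(e)\le i$ span an $i$-edge-colorable subgraph of $G$; let $g_{i}$ be the maximum number of edges in an $i$-edge-colorable subgraph of $G$, so $g_{i}=|E(G)|$ for $i\ge\chi^{\prime}(G)$. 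Then $\Sigma^{\prime}(G,\alpha)\ge\sum_{i\ge 0}\bigl(|E(G)|-g_{i}\bigr)$, and equality forces $\{e:\alpha(e)\le i\}$ to be a \emph{maximum} $i$-edge-colorable subgraph for every $i$; such an $\alpha$ colors all edges with colors at most $\chi^{\prime}(G)$ and so, being proper, uses exactly $\chi^{\prime}(G)$ colors. Hence it is enough to realize equality in this bound.

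Equivalently, it is enough to find a chain $\emptyset=H_{0}\subseteq H_{1}\subseteq\cdots\subseteq H_{\chi^{\prime}(G)}=G$ in which every $H_{j}$ is a maximum $j$-edge-colorable subgraph of $G$ and every $H_{j}\setminus H_{j-1}$ is a matching: then assigning color $j$ to $H_{j}\setminus H_{j-1}$ yields a proper coloring with $\chi^{\prime}(G)$ colors whose partial unions are the $H_{j}$, so it attains the lower bound. For an $r$-regular graph with $\chi^{\prime}(G)=r$ (the class-one case) the chain is immediate from a $1$-factorization $G=F_{1}\cup\cdots\cup F_{r}$: set $H_{j}=F_{1}\cup\cdots\cup F_{j}$, which is $j$-regular, hence has $nj/2$ edges with $n=|V(G)|$ (note $n$ is even, $G$ having a $1$-factor), the largest a union of $j$ matchings can be, while $H_{j}\setminus H_{j-1}=F_{j}$ is a matching.

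For bipartite graphs and for $r$-regular graphs with $\chi^{\prime}(G)=r+1$ (the class-two case) the chain needs a real argument. If $G$ is bipartite then $\chi^{\prime}(G)=\Delta(G)$ by Theorem~\ref{mytheorem0}, and since no subgraph of $G$ has an odd cycle, a subgraph of maximum degree at most $j$ is automatically $j$-edge-colorable; thus $g_{j}$ is simply the maximum number of edges in a subgraph of maximum degree $\le j$, and the plan is to build the chain $H_{0}\subseteq\cdots\subseteq H_{\Delta(G)}$ of such maximum subgraphs with matching increments using the exchange properties of degree-constrained subgraphs of bipartite graphs (K\"onig's and Hall's theorems). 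If $G$ is $r$-regular with $\chi^{\prime}(G)=r+1$, the plan is to start from a proper $(r+1)$-edge-coloring $G=N_{1}\cup\cdots\cup N_{r+1}$ and rebalance it by Kempe exchanges on unions of two color classes, moving edges out of the large classes into the small ones, until each union of the first $j$ classes is a maximum $j$-edge-colorable subgraph; termination can be controlled by the fact that in a minimum-sum coloring no edge colored larger than $i$ can be recolored with color $i$. In either case one obtains a sum edge-coloring with $\chi^{\prime}(G)$ colors, whence $s^{\prime}(G)=\chi^{\prime}(G)$.

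The heart of the proof, and the expected main obstacle, is exactly the existence of these chains. In the bipartite case one must check that two consecutive maximum degree-bounded subgraphs can always be chosen to differ by a matching, which reduces to a Hall-type condition on the vertices of the current maximum degree. In the class-two regular case the subtlety is that $G$ need not have a perfect matching, so the numbers $g_{j}$ are controlled by maximum matchings and minimum edge covers — and, near the top of the range, by Vizing-type fan arguments measuring how far $G$ is from class one — rather than by a tidy $1$-factorization. Granting the chains, everything else is the elementary counting above together with the trivial bound $s^{\prime}(G)\ge\chi^{\prime}(G)$.
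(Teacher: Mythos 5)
This theorem is not proved in the paper at all --- it is quoted from \cite{b9,b11} --- so your sketch has to stand on its own, and it has a genuine gap. The easy inequality $s^{\prime}(G)\geq\chi^{\prime}(G)$ and the class-one regular case (via a $1$-factorization) are fine, but for the remaining cases you replace the theorem by a much stronger claim: that $\Sigma^{\prime}(G)$ attains the lower bound $\sum_{i\geq 0}\left(|E(G)|-g_{i}\right)$, equivalently that there is a nested chain of maximum $i$-edge-colorable subgraphs with matching increments. For bipartite graphs this stronger claim is exactly the wrong thing to aim at: by K\"onig's theorem a maximum $i$-edge-colorable subgraph of a bipartite graph is the same as a maximum subgraph of maximum degree at most $i$, so each $g_{i}$ is computable in polynomial time by a degree-constrained subgraph (flow) computation. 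Hence, if your chain existed for every bipartite graph with $\Delta(G)=3$, the identity $\Sigma^{\prime}(G)=\sum_{i\geq 0}\left(|E(G)|-g_{i}\right)$ would make $\Sigma^{\prime}$ polynomial-time computable on precisely the class of instances for which Theorem~\ref{mytheorem8} (following \cite{b9}) establishes $NP$-completeness. So, unless $P=NP$, the chain simply does not always exist, and no Hall- or K\"onig-type exchange argument can produce it; the step you defer as ``the heart of the proof'' is not a technical obstacle but a statement that must fail on some instances.

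The class-two regular case suffers from the same overreach: the existence of your chain is again far stronger than the theorem (which only asserts that \emph{some} minimum-sum coloring uses $\chi^{\prime}(G)$ colors, not that the optimum equals the prefix-maximum bound), and your plan --- Kempe exchanges ``until each union of the first $j$ classes is a maximum $j$-edge-colorable subgraph'' --- comes with no invariant, no termination argument, and no reason why a coloring with all prefixes maximum should exist in a class-two regular graph. The proofs in the cited sources go in the opposite direction: they take an optimal (minimum-sum) proper edge-coloring and show by recoloring arguments that every use of a color larger than $\chi^{\prime}(G)$ can be eliminated without increasing the sum (for bipartite graphs this is a K\"onig-type alternating-path argument on two color classes; for regular graphs an analogous transformation of edge-colorings is used). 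I recommend you keep your opening reduction ($s^{\prime}(G)\geq\chi^{\prime}(G)$ plus ``exhibit one optimal coloring with $\chi^{\prime}(G)$ colors'') but then argue about an optimal coloring directly in this way, rather than trying to force equality with $\sum_{i\geq 0}\left(|E(G)|-g_{i}\right)$.
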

\bigskip

\section{Edge-chromatic sums of regular graphs}\

In this section we consider the problem of finding the
edge-chromatic sum of regular graphs. It is easy to show that the
edge-chromatic sum problem of graphs $G$ with $\Delta(G)\leq 2$ can
be solved in polynomial time. On the other hand, in \cite{b19}, it
was proved that the problem of finding the edge-chromatic sum of an
$r$-regular ($r\geq 3$) graph is $NP$-complete. Clearly,
$\Sigma^{\prime}(G)\geq \frac{nr(r+1)}{4}$ for any $r$-regular graph
$G$ with $n$ vertices, since the sum of colors appearing on the
edges incident to any vertex is at least $\frac{r(r+1)}{2}$.
Moreover, it is easy to see that
$\Sigma^{\prime}(G)=\frac{nr(r+1)}{4}$ if and only if
$\chi^{\prime}(G)=r$ for any $r$-regular graph $G$ with $n$
vertices.\\

First we give a result on $R$-sequential colorings of regular graphs
and then we use this result for constructing an approximation
algorithm.

\begin{theorem}
\label{mytheorem4} If $G$ is an $r$-regular graph with $n$ vertices,
then $G$ has an $R$-sequential $(r+1)$-coloring with $\vert R\vert
\geq \left\lceil\frac{n}{r+1}\right\rceil$.
\end{theorem}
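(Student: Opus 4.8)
The plan is to use Vizing's theorem (Theorem \ref{mytheorem1}): since $G$ is $r$-regular, it has a proper edge-coloring $\beta$ with colors $1,\ldots,r+1$. The idea is then to modify $\beta$, one vertex at a time, so that a large set $R$ of vertices becomes ``sequential,'' i.e. sees exactly the colors $1,\ldots,r$ on its incident edges (equivalently, misses color $r+1$). Call a vertex $v$ \emph{good} (for a given proper $(r+1)$-coloring) if color $r+1$ does not appear on any edge incident to $v$; then $R$ will be the set of good vertices, and we must arrange $|R|\geq\lceil n/(r+1)\rceil$.

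First I would estimate how many edges can carry the color $r+1$. In any proper $(r+1)$-coloring of an $r$-regular $n$-vertex graph, the color class of color $r+1$ is a matching $M_{r+1}$, and a vertex is bad precisely when it is covered by $M_{r+1}$; so the number of bad vertices is $2|M_{r+1}|$ and $|R| = n - 2|M_{r+1}|$. Thus it suffices to produce a proper $(r+1)$-coloring in which the last color class is a matching of size at most $\frac{1}{2}\left(n-\left\lceil\frac{n}{r+1}\right\rceil\right)$, i.e. roughly $\frac{rn}{2(r+1)}$ edges get color $r+1$.

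The key step is a local recoloring / augmenting-path argument. Starting from an arbitrary proper $(r+1)$-coloring, I would repeatedly try to reduce $|M_{r+1}|$: pick an edge $uv$ with $\beta(uv)=r+1$. Each of $u,v$ misses at least one color in $[1,r+1]$ besides possibly being incident to another $(r+1)$-colored edge—actually since $d_G(u)=d_G(v)=r$ and $uv$ uses color $r+1$, both $u$ and $v$ miss exactly one color from $[1,r]$ (and neither is incident to a second edge of color $r+1$, as that class is a matching). Take a color $a$ missed at $u$ and a color $b$ missed at $v$; if $a=b$ we simply recolor $uv$ with $a$, decreasing $|M_{r+1}|$. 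If $a\neq b$, consider the $(a,b)$-Kempe chain (the maximal path/cycle through $u$ alternating colors $a,b$); standard Vizing-style analysis of where this chain ends lets us either swap along it to free a common color at $u$ and $v$, thereby recoloring $uv$ away from $r+1$, or conclude the chain links $u$ to $v$ in a way that blocks the swap. Iterating, the process terminates at a coloring where $M_{r+1}$ is, in a suitable sense, ``irreducible.''

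The main obstacle — and the heart of the proof — is bounding $|M_{r+1}|$ at such an irreducible coloring. I expect the right way to close the gap is a counting argument: in an irreducible coloring, around each edge $e=uv\in M_{r+1}$ the ``blocking'' structure (the Kempe chains from $u$ to $v$ for every pair of missed colors) forces many ordinary edges, and these certificates for distinct edges of $M_{r+1}$ cannot overlap too much. Combining this with $r$-regularity should yield $2|M_{r+1}|\le n - \lceil n/(r+1)\rceil$, equivalently $|R|\ge \lceil n/(r+1)\rceil$. An alternative, possibly cleaner route: partition a proper $(r+1)$-edge-coloring's $r+1$ color classes into matchings $M_1,\ldots,M_{r+1}$; by double counting $\sum_i |M_i| = |E(G)| = \tfrac{nr}{2}$, so the smallest class $M_j$ has $|M_j|\le \frac{nr}{2(r+1)}$, and relabeling $j\mapsto r+1$ makes the good set have size $n-2|M_j|\ge n-\frac{nr}{r+1}=\frac{n}{r+1}$; taking ceilings (using that $|R|$ is an integer) gives $|R|\ge\lceil n/(r+1)\rceil$. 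I would try this averaging argument first, as it sidesteps the Kempe-chain analysis entirely; the only thing to check carefully is the ceiling, which follows since $|R|=n-2|M_j|$ is an integer that is $\ge n/(r+1)$, hence $\ge\lceil n/(r+1)\rceil$.
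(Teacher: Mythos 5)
Your averaging argument (the route you say you would try first) is correct and is essentially the paper's own proof in dual form: the paper applies pigeonhole to the sets of vertices missing each of the $r+1$ colors and then swaps the popular missing color with $r+1$, which is the same as your observation that the smallest color class $M_j$ satisfies $|M_j|\le \frac{nr}{2(r+1)}$, giving $|R|=n-2|M_j|\ge \frac{n}{r+1}$, hence $\ge\left\lceil\frac{n}{r+1}\right\rceil$ by integrality, after relabeling $j\mapsto r+1$. The Kempe-chain/irreducibility digression is unnecessary (and is the only part of your write-up that is not actually carried out), so you should simply drop it and present the averaging argument.
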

\begin{proof} By Theorem \ref{mytheorem1}, there exists a proper $(r+1)$-coloring
$\alpha$ of the graph $G$. For $i=1,2,\ldots,r+1$, define the set
$V_{\alpha}(i)$ as follows:
\begin{center}
$V_{\alpha}(i)=\left\{v\in V(G):i\notin S(v,\alpha)\right\}$.
\end{center}

Clearly, for any $i^{\prime},i^{\prime\prime}, 1\leq
i^{\prime}<i^{\prime\prime}\leq r+1$, we have

\begin{center}
$V_{\alpha}(i^{\prime})\cap
V_{\alpha}(i^{\prime\prime})=\emptyset$~~
and~~~$\underset{i=1}{\overset{r+1}{\bigcup }}V_{\alpha}(i)=V(G)$.
\end{center}

Hence,

\begin{center}
$n=\vert V(G)\vert =\left\vert \underset{i=1}{\overset{r+1}{\bigcup
}}V_{\alpha}(i)\right\vert=\underset{i=1}{\overset{r+1}{\sum }}\vert
V_{\alpha}(i)\vert$.
\end{center}

This implies that there exists $i_{0}$, $1\leq i_{0}\leq r+1$, for
which $\vert V_{\alpha}(i_{0})\vert \geq
\left\lceil\frac{n}{r+1}\right\rceil$. Let $R=V_{\alpha}(i_{0})$.

If $i_{0}=r+1$, then $\alpha$ is an $R$-sequential $(r+1)$-coloring
of $G$; otherwise define an edge-coloring $\beta$ as follows: for
any $e\in E(G)$, let

\begin{center}
$\beta(e)=\left\{
\begin{tabular}{ll}
$\alpha(e)$, & if $\alpha(e)\neq i_{0},r+1$,\\
$i_{0}$, & if $\alpha(e)=r+1$,\\
$r+1$, & if $\alpha(e)=i_{0}$.\\
\end{tabular}%
\right.$
\end{center}

It is easy to see that $\beta$ is an $R$-sequential $(r+1)$-coloring
of $G$ with $\vert R\vert \geq
\left\lceil\frac{n}{r+1}\right\rceil$. ~$\square$
\end{proof}

\begin{corollary}
\label{mycorollary1} If $G$ is a cubic graph with $n$ vertices, then
$G$ has an $R$-sequential $4$-coloring with $\vert R\vert \geq
\left\lceil\frac{n}{4}\right\rceil$.
\end{corollary}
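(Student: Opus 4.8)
The plan is to derive the statement directly from Theorem~\ref{mytheorem4} by specializing the parameter $r$. A cubic graph is precisely a $3$-regular graph, so I would set $r=3$ in Theorem~\ref{mytheorem4}. That theorem then guarantees an $R$-sequential $(r+1)$-coloring, i.e.\ an $R$-sequential $4$-coloring, with $\vert R\vert\geq\left\lceil\frac{n}{r+1}\right\rceil=\left\lceil\frac{n}{4}\right\rceil$, which is exactly the assertion.

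There is essentially no obstacle here: the only thing to check is the bookkeeping that $r=3$ is admissible (it is, since Theorem~\ref{mytheorem4} is stated for arbitrary $r$-regular graphs with no lower bound on $r$ needed for that statement) and that $r+1=4$ and $\left\lceil\frac{n}{r+1}\right\rceil=\left\lceil\frac{n}{4}\right\rceil$. So the proof is a single sentence invoking Theorem~\ref{mytheorem4} with $r=3$.

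If one wanted a self-contained argument instead, I would simply repeat the proof of Theorem~\ref{mytheorem4} in this case: take a proper $4$-coloring $\alpha$ of the cubic graph $G$ (which exists by Theorem~\ref{mytheorem1} since $\Delta(G)=3$), form the sets $V_{\alpha}(i)=\{v\in V(G):i\notin S(v,\alpha)\}$ for $i=1,\dots,4$, note they partition $V(G)$ so some $V_{\alpha}(i_0)$ has size at least $\left\lceil\frac{n}{4}\right\rceil$, and if $i_0\neq 4$ swap colors $i_0$ and $4$ to obtain an $R$-sequential $4$-coloring with $R=V_{\alpha}(i_0)$. But invoking the theorem directly is the cleaner route.
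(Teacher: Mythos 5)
Your proposal is correct and matches the paper's intent exactly: the paper states this as an immediate corollary of Theorem~\ref{mytheorem4}, obtained by taking $r=3$ (a cubic graph is $3$-regular), so $r+1=4$ and $\left\lceil\frac{n}{r+1}\right\rceil=\left\lceil\frac{n}{4}\right\rceil$. No further argument is needed.
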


Note that if $n$ is odd, then the lower bound in Theorem
\ref{mytheorem4} cannot be improved, since the complete graph
$K_{n}$ has an $R$-sequential $n$-coloring with $\vert R\vert =1$.\\

The theorem we are going to prove will be used in section 5.

\begin{theorem}
\label{mytheorem5} For any $n\in \mathbf{N}$, we have
\begin{center}
$\Sigma^{\prime}(K_{n})=\left\{
\begin{tabular}{ll}
$\frac {n(n^{2}-1)}{4}$, & if $n$ is odd,\\
$\frac {(n-1)n^{2}}{4}$, & if $n$ is even.\\
\end{tabular}%
\right.$
\end{center}
\end{theorem}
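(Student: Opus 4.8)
The plan is to compute $\Sigma^{\prime}(K_n)$ by a matching lower bound and upper bound argument, treating the even and odd cases separately according to the edge-chromatic index given by Theorem~\ref{mytheorem2}.

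\textbf{The even case.} When $n$ is even, Theorem~\ref{mytheorem2} gives $\chi^{\prime}(K_n)=n-1$, and $K_n$ is $(n-1)$-regular. By the remark in Section~3, an $r$-regular graph on $m$ vertices satisfies $\Sigma^{\prime}(G)=\frac{mr(r+1)}{4}$ precisely when $\chi^{\prime}(G)=r$; applying this with $m=n$ and $r=n-1$ yields $\Sigma^{\prime}(K_n)=\frac{n(n-1)n}{4}=\frac{(n-1)n^2}{4}$ directly. (Alternatively: any proper $(n-1)$-edge-coloring of $K_n$ is a perfect matching decomposition, each color class has $n/2$ edges, so the sum is $\frac{n}{2}(1+2+\cdots+(n-1))=\frac{n}{2}\cdot\frac{(n-1)n}{2}$, and one checks no coloring using a color $\geq n$ can do better since the lower bound $\frac{nr(r+1)}{4}$ is already met.)

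\textbf{The odd case.} When $n$ is odd, $K_n$ is $(n-1)$-regular with $n-1$ even, and $\chi^{\prime}(K_n)=n$. Here the vertex-by-vertex bound $\Sigma^{\prime}(K_n)\geq\frac{n(n-1)n}{4}$ from Section~3 is \emph{not} tight, so the main work is to get the exact value $\frac{n(n^2-1)}{4}$. For the upper bound I would exhibit a proper $n$-edge-coloring in which, for each color $i\in[1,n]$, the color class is a near-perfect matching of size $\frac{n-1}{2}$ (the standard cyclic/round-robin coloring of $K_n$ has exactly this property: every color misses exactly one vertex). This gives $\Sigma^{\prime}(K_n)\leq\frac{n-1}{2}(1+2+\cdots+n)=\frac{n-1}{2}\cdot\frac{n(n+1)}{2}=\frac{n(n^2-1)}{4}$. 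For the matching lower bound, I would argue that in \emph{any} proper edge-coloring of $K_n$, each color class is a matching of size at most $\frac{n-1}{2}$, so if $m_i$ denotes the size of the class of color $i$ then $\sum_i m_i=\binom{n}{2}=\frac{n(n-1)}{2}$ with each $m_i\leq\frac{n-1}{2}$; the sum $\sum_i i\cdot m_i$ is then minimized by pushing as much weight as possible onto the smallest colors, i.e.\ by setting $m_1=\cdots=m_{n-1}=\frac{n-1}{2}$ and $m_i=0$ for $i\geq n$, which is exactly the round-robin coloring and gives the value $\frac{n(n^2-1)}{4}$.

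\textbf{Main obstacle.} The routine parts are the two summation identities; the real content is the lower bound in the odd case, namely justifying that a weight-shifting (exchange) argument is valid here. Concretely one must verify: (i) every color class is a matching of size $\leq\lfloor n/2\rfloor=\frac{n-1}{2}$; (ii) if some coloring $\alpha$ uses a color $c$ with $m_c>0$ while some earlier color $c'<c$ has $m_{c'}<\frac{n-1}{2}$, then one can recolor to decrease $\Sigma^{\prime}$ — this follows since a matching of size $<\frac{n-1}{2}$ in $K_n$ is not maximal (it misses at least two vertices, which may or may not be adjacent — in $K_n$ they always are), so an edge can be moved down to color $c'$; (iii) iterating drives any optimal coloring to the canonical profile $m_1=\cdots=m_{n-1}=\frac{n-1}{2}$. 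Care is needed that step (ii) preserves properness when shuffling an edge between color classes, but since we only ever grow a non-maximal matching by an edge disjoint from it, properness is automatic. This establishes $\Sigma^{\prime}(K_n)\geq\frac{n(n^2-1)}{4}$ and, combined with the explicit coloring, completes the odd case.
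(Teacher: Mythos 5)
Your even case is essentially the paper's own argument (the Section 3 remark that an $r$-regular graph attains the lower bound $\frac{nr(r+1)}{4}$ exactly when $\chi^{\prime}=r$, combined with Theorems \ref{mytheorem2} and \ref{mytheorem3}), but your odd case takes a genuinely different route. The paper argues: by Theorems \ref{mytheorem2} and \ref{mytheorem3}, $s^{\prime}(K_{n})=\chi^{\prime}(K_{n})=n$, so some sum edge-coloring uses exactly the colors $1,\ldots,n$; in any proper $n$-coloring of $K_{n}$ each vertex misses exactly one color and the $n$ missing colors are pairwise distinct, so the sum is forced to be $\frac{1}{2}\left(\frac{n^{2}(n+1)}{2}-\frac{n(n+1)}{2}\right)=\frac{n(n^{2}-1)}{4}$. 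You instead pair an explicit near-one-factorization (round-robin) upper bound with a matching-size lower bound; this is more self-contained, since it bypasses the edge-strength theorem entirely, at the price of exhibiting the round-robin coloring and carrying out the weight-shifting minimization.

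Two points in your odd case need repair, though neither is fatal. First, your extremal profile is off by one color: with $m_{i}\leq\frac{n-1}{2}$ and $\sum_{i}m_{i}=\binom{n}{2}=n\cdot\frac{n-1}{2}$, the greedy minimum is attained at $m_{1}=\cdots=m_{n}=\frac{n-1}{2}$ (all $n$ colors full), not at $m_{1}=\cdots=m_{n-1}=\frac{n-1}{2}$ with $m_{i}=0$ for $i\geq n$, which accounts for only $\frac{(n-1)^{2}}{2}$ edges; the value you state, $\frac{n-1}{2}\cdot\frac{n(n+1)}{2}=\frac{n(n^{2}-1)}{4}$, corresponds to the correct profile, so this is a slip rather than a conceptual error. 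Second, your recoloring step (ii) is not sound as written: the matching of color $c^{\prime}$ misses at least three vertices, but the edge joining two of them may already carry a color smaller than $c^{\prime}$, so ``moving it down to $c^{\prime}$'' need not decrease the sum. You do not need any recoloring, however: the lower bound is a purely numerical minimization of $\sum_{i}i\,x_{i}$ subject to $0\leq x_{i}\leq\frac{n-1}{2}$ and $\sum_{i}x_{i}=\frac{n(n-1)}{2}$, where mass rather than edges is shifted downward, so properness never enters. With these two corrections your argument is complete and correct.
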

\begin{proof} Since for any $r$-regular graph $G$ with $n$ vertices, $\Sigma^{\prime}(G)=\frac{nr(r+1)}{4}$
if and only if $\chi^{\prime}(G)=r$ and, by Theorems
\ref{mytheorem2} and \ref{mytheorem3}, we obtain
$\Sigma^{\prime}(K_{n})=\frac{(n-1)n^{2}}{4}$ if $n$ is even.

Now let $n$ be an odd number and $n\geq 3$. In this case by Theorems
\ref{mytheorem2} and \ref{mytheorem3}, we have
$s^{\prime}(K_{n})=\chi^{\prime}(K_{n})=n$. It is easy to see that
in any proper $n$-coloring of $K_{n}$ the missing colors at $n$
vertices are all distinct. Hence,
\begin{eqnarray*}
\Sigma^{\prime}(K_{n})=\frac{\frac{n^{2}(n+1)}{2}-\frac{n(n+1)}{2}}{2}=\frac
{n(n^{2}-1)}{4}.
\end{eqnarray*} ~$\square$
\end{proof}

In \cite{b5}, it was shown that there exists a $2$-approximation
algorithm for the edge-chromatic sum problem on general graphs. Now
we show that there exists a
$\left(1+\frac{2r}{(r+1)^{2}}\right)$-approximation algorithm for
the edge-chromatic sum problem on $r$-regular graphs for $r\geq 3$.
Note that $1+\frac{2r}{(r+1)^{2}}$ decreases for increasing $r$ and
$\frac{11}{8}$ is its maximum value achieved for $r=3$. Thus, we
show that there is a $\frac{11}{8}$-approximation algorithm for the
edge-chromatic sum problem on regular graphs.

\begin{theorem}
\label{mytheorem6} For any $r\geq 3$, there is a polynomial time
$\left(1+\frac{2r}{(r+1)^{2}}\right)$-approximation algorithm for
the edge-chromatic sum problem on $r$-regular graphs.
\end{theorem}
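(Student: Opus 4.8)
The plan is to turn Theorem~\ref{mytheorem4} into an algorithm and then compare the color sum of the coloring it produces with the lower bound $\Sigma^{\prime}(G)\geq \frac{nr(r+1)}{4}$ recalled at the start of this section. First I would observe that the proof of Theorem~\ref{mytheorem4} is constructive and polynomial: a proper $(r+1)$-edge-coloring of $G$ exists by Theorem~\ref{mytheorem1} and can be found in polynomial time, the sets $V_{\alpha}(i)$ and a largest one $V_{\alpha}(i_{0})$ are obtained by inspection, and passing from $\alpha$ to $\beta$ is a single interchange of two color classes. So the algorithm is simply: compute an $R$-sequential $(r+1)$-coloring $\alpha$ of $G$ with $\vert R\vert\geq\left\lceil\frac{n}{r+1}\right\rceil$ and output it.

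For the analysis, let $\sigma(v)$ be the sum of the colors of the edges incident to $v$, so that $\Sigma^{\prime}(G,\alpha)=\frac12\sum_{v\in V(G)}\sigma(v)$ since each edge is counted at its two ends. For $v\in R$ the edges at $v$ carry exactly the colors $1,\ldots,r$, hence $\sigma(v)=\frac{r(r+1)}{2}$. For $v\notin R$, since $d_{G}(v)=r$ and $\alpha$ uses $r+1$ colors, exactly one color of $[1,r+1]$ is absent at $v$, and in the worst case (the missing color is $1$) we get $\sigma(v)\leq \frac{(r+1)(r+2)}{2}-1=\frac{r(r+3)}{2}$. Writing $k=\vert R\vert$, this gives $\Sigma^{\prime}(G,\alpha)\leq \frac12\left(k\cdot\frac{r(r+1)}{2}+(n-k)\cdot\frac{r(r+3)}{2}\right)=\frac{r}{4}\bigl(n(r+3)-2k\bigr)$. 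Substituting $k\geq\left\lceil\frac{n}{r+1}\right\rceil\geq\frac{n}{r+1}$ yields $\Sigma^{\prime}(G,\alpha)\leq \frac{rn}{4}\cdot\frac{(r+3)(r+1)-2}{r+1}=\frac{rn}{4}\cdot\frac{r^{2}+4r+1}{r+1}$, and dividing by $\Sigma^{\prime}(G)\geq\frac{nr(r+1)}{4}$ gives $\frac{\Sigma^{\prime}(G,\alpha)}{\Sigma^{\prime}(G)}\leq\frac{r^{2}+4r+1}{(r+1)^{2}}=1+\frac{2r}{(r+1)^{2}}$.

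The only delicate point is that the two ingredients — the per-vertex estimate $\sigma(v)\leq\frac{r(r+3)}{2}$ for $v\notin R$ and the size bound $\vert R\vert\geq\frac{n}{r+1}$ of Theorem~\ref{mytheorem4} — are exactly strong enough to produce the factor $1+\frac{2r}{(r+1)^{2}}$ (which equals $\frac{11}{8}$ at $r=3$); everything else is the routine algebra above, while polynomiality and the fact that $\alpha$ is a genuine proper edge-coloring come directly from Theorems~\ref{mytheorem1} and~\ref{mytheorem4}. So I do not anticipate a real obstacle beyond this bookkeeping.
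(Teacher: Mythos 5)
Your proposal is correct and follows essentially the same route as the paper: construct the $R$-sequential $(r+1)$-coloring of Theorem~\ref{mytheorem4} in polynomial time, bound the color sum at each vertex by $\frac{r(r+1)}{2}$ on $R$ and $\frac{r(r+3)}{2}$ off $R$, and compare with the lower bound $\Sigma^{\prime}(G)\geq\frac{nr(r+1)}{4}$ to get the ratio $1+\frac{2r}{(r+1)^{2}}$. The algebra and the final bound $\frac{nr(r^{2}+4r+1)}{4(r+1)}$ coincide with the paper's computation.
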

\begin{proof} Let $G$ be an $r$-regular graph with $n$ vertices and $m$ edges.
Now we describe a polynomial time algorithm $A$ for constructing a
special proper $(r+1)$-coloring of $G$. First we construct a proper
$(r+1)$-coloring $\alpha$ of $G$ in $O(mn)$ time \cite{b21}. Next we
recolor some edges as it is described in the proof of Theorem
\ref{mytheorem4} to obtain an $R$-sequential $(r+1)$-coloring
$\beta$ of $G$ with $\vert R\vert \geq
\left\lceil\frac{n}{r+1}\right\rceil$. Clearly, we can do it in
$O(m)$ time. Now, taking into account that the sum of colors
appearing on the edges incident to any vertex is at most
$\frac{r(r+3)}{2}$, we have

\begin{eqnarray*}
\Sigma_{A}^{\prime}(G)=\Sigma^{\prime}\left(G,\beta\right) &\leq&
\frac{\frac{r(r+1)}{2}\left\lceil\frac{n}{r+1}\right\rceil+\left(n-\left\lceil\frac{n}{r+1}\right\rceil\right)\frac{r(r+3)}{2}}{2}\leq
\frac{\frac{r(r+1)}{2}\frac{n}{r+1}+\left(n-\frac{n}{r+1}\right)\frac{r(r+3)}{2}}{2}\\
&=&
\frac{\frac{r(r+1)}{2}\frac{n}{r+1}+\frac{nr}{r+1}\frac{r(r+3)}{2}}{2}=\frac{nr(r^{2}+4r+1)}{4(r+1)}.
\end{eqnarray*}

On the other hand, since $\Sigma^{\prime}(G)\geq \frac{nr(r+1)}{4}$,
we get

\begin{eqnarray*}
\frac{\Sigma_{A}^{\prime}(G)}{\Sigma^{\prime}(G)}\leq
\frac{nr(r^{2}+4r+1)}{4(r+1)}\cdot\frac{4}{nr(r+1)}=\frac{r^{2}+4r+1}{(r+1)^{2}}=1+\frac{2r}{(r+1)^{2}}.
\end{eqnarray*}

This shows that there exists a
$\left(1+\frac{2r}{(r+1)^{2}}\right)$-approximation algorithm for
the edge-chromatic sum problem on $r$-regular graphs. Moreover, we
can construct the aforementioned coloring $\beta$ for a regular
graph in $O(mn)$ time. ~$\square$
\end{proof}\

\section{Edge-chromatic sums of bipartite graphs}\

In this section we consider the problem of finding the
edge-chromatic sum of bipartite graphs. Let $G=(U\cup W,E)$ be a
bipartite graph with a bipartition $(U,W)$. By $U_{i}\subseteq U$
and $W_{i}\subseteq W$, we denote sets of vertices of degree $i$ in
$U$ and $W$, respectively. Define sets $V_{\geq i}\subseteq V(G)$
and $U_{\geq i}\subseteq U$ as follows: $V_{\geq i}=\{v:v\in
V(G)\wedge d_{G}(v)\geq i\}$ and $U_{\geq i}=\{u\in V(G):u\in
U\wedge d_{G}(u)\geq i\}$. It was proved the following:

\begin{theorem}\label{mytheorem7} \cite{b1,b2,b3,b4} If $G=(U\cup W,E)$ is a bipartite graph with
$d_{G}(u)\geq d_{G}(w)$ for every $uw\in E(G)$, where $u\in U$ and
$w\in W$, then $G$ has a $U$-sequential $\Delta(G)$-coloring.
\end{theorem}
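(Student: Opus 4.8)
The plan is to induct on $\Delta=\Delta(G)$. If $\Delta\le 1$ then $G$ is a matching together with some isolated vertices, and giving every edge colour $1$ is a $U$-sequential $\Delta$-colouring; so assume $\Delta\ge 2$. Write $U_{\Delta}\subseteq U$ and $W_{\Delta}\subseteq W$ for the vertices of degree $\Delta$ in $U$ and in $W$. The whole inductive step reduces to producing one matching $M$ in $G$ with two properties: (i) $M$ saturates every vertex of $U_{\Delta}\cup W_{\Delta}$, and (ii) every edge of $M$ meets $U$ inside $U_{\Delta}$ (equivalently, $M$ touches no $U$-vertex of degree $<\Delta$). Granting such an $M$, colour $M$ with colour $\Delta$ and set $G'=G-M$. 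The vertices of $G$ of degree $\Delta$ are exactly those of $U_{\Delta}\cup W_{\Delta}$, and $M$ removes one edge at each of them while a vertex that had degree $\Delta$ keeps degree $\ge\Delta-1$, so $\Delta(G')=\Delta-1$. Moreover $G'$ still satisfies the hypothesis: for an edge $uw\in E(G')$ the only case needing care is that $u$ is saturated by $M$ and $w$ is not, but then (ii) gives $u\in U_{\Delta}$, so $d_G(u)=\Delta$, while $w\notin W_{\Delta}$ gives $d_G(w)\le\Delta-1$, hence $d_{G'}(u)=\Delta-1\ge d_G(w)=d_{G'}(w)$; the other three sub-cases follow at once from $d_G(u)\ge d_G(w)$. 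By the induction hypothesis $G'$ has a $U$-sequential $(\Delta-1)$-colouring, and adjoining colour $\Delta$ on $M$ makes it $U$-sequential for $G$: a vertex $u\in U_{\Delta}$ sees $\{1,\dots,\Delta-1\}$ from $G'$ plus $\Delta$ from $M$, and by (ii) a vertex $u\in U\setminus U_{\Delta}$ is untouched by $M$ and already sees $\{1,\dots,d_G(u)\}$.

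It remains to construct $M$. First, the hypothesis forces every neighbour of a vertex $w\in W_{\Delta}$ to lie in $U_{\Delta}$, since $d_G(u)\ge d_G(w)=\Delta$ implies $d_G(u)=\Delta$. Let $H$ be the subgraph of $G$ formed by all edges incident to $U_{\Delta}$. Then in $H$ every vertex of $U_{\Delta}$ has degree $\Delta$, every vertex of $W_{\Delta}$ has degree $\Delta$ (all its $G$-edges run into $U_{\Delta}$, hence lie in $H$), and every vertex of $W$ has degree at most $\Delta$. A double count now verifies Hall's condition for $U_{\Delta}$ in $H$: for $S\subseteq U_{\Delta}$ the $\Delta|S|$ edges leaving $S$ are spread over $N_H(S)$ with at most $\Delta$ landing at each vertex, so $|N_H(S)|\ge|S|$, and $H$ has a matching $M_{1}$ saturating $U_{\Delta}$. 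The symmetric double count (using $N_H(T)\subseteq U_{\Delta}$ for $T\subseteq W_{\Delta}$) gives a matching $M_{2}$ in $H$ saturating $W_{\Delta}$. Finally, by the Mendelsohn--Dulmage theorem there is a matching $M\subseteq M_{1}\cup M_{2}$ saturating every vertex that $M_{1}$ saturates on the $U$-side and every vertex that $M_{2}$ saturates on the $W$-side; in particular $M$ saturates $U_{\Delta}\cup W_{\Delta}$, giving (i), and $M\subseteq E(H)$ gives (ii).

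The main obstacle is exactly this last amalgamation step: $M_{1}$ and $M_{2}$ separately cover $U_{\Delta}$ and $W_{\Delta}$, but I need a single matching covering both sets simultaneously. One can cite Mendelsohn--Dulmage, or reprove it in two lines: $M_{1}\triangle M_{2}$ is a vertex-disjoint union of paths and even cycles whose edges alternate between $M_{1}$ and $M_{2}$, and, starting from $M_{1}\cap M_{2}$ and adding on each component whichever of the two matchings keeps its $U_{\Delta}$- and $W_{\Delta}$-endpoints saturated, one obtains $M$. Everything else — the base case, the degree bookkeeping for $G'$, and the extension of the colouring — is routine.
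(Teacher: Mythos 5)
The paper does not prove this statement at all: Theorem~\ref{mytheorem7} is quoted from \cite{b1,b2,b3,b4}, so there is no in-paper argument to compare yours against. Judged on its own, your proof is correct and self-contained. The induction on $\Delta$ is sound: the base case is trivial; the matching $M$ with properties (i) and (ii) exists, since every neighbour of a vertex of $W_{\Delta}$ must lie in $U_{\Delta}$ (by the degree hypothesis), the double counts in the subgraph $H$ of edges incident to $U_{\Delta}$ do verify Hall's condition for $U_{\Delta}$ and for $W_{\Delta}$ (using $d_{H}(u)=\Delta$ on $U_{\Delta}$, $d_{H}(w)=\Delta$ on $W_{\Delta}$, $d_{H}(w)\leq\Delta$ elsewhere), and the Mendelsohn--Dulmage amalgamation (or your alternating-component argument, whose case analysis on path endpoints works out) yields one matching covering $U_{\Delta}\cup W_{\Delta}$ inside $E(H)$. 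The bookkeeping after deleting $M$ is also right: the only nontrivial case for preserving $d_{G'}(u)\geq d_{G'}(w)$ is $u$ matched, $w$ unmatched, which property (ii) handles exactly as you say, and the extension by colour $\Delta$ on $M$ is proper and restores $U$-sequentiality at every vertex of $U$. This is essentially the classical ``peel off a matching covering all maximum-degree vertices, supported on edges at $U_{\Delta}$'' strategy used in the cited literature, so your route is a legitimate reconstruction of the known proof rather than a new one; the only stylistic remark is that you could shorten the Hall/Mendelsohn--Dulmage step by applying the defect form of Hall's theorem directly to $U_{\Delta}\cup W_{\Delta}$, but what you wrote is complete as it stands.
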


By this theorem, we obtain the following corollary:

\begin{corollary}
\label{mycorollary3} If $G=(U\cup W,E)$ is a bipartite graph with
$d_{G}(u)\geq d_{G}(w)$ for every $uw\in E(G)$, where $u\in U$ and
$w\in W$, then a $U$-sequential $\Delta(G)$-coloring of $G$ is a sum
edge-coloring of $G$ and $\Sigma^{\prime}(G)=\sum_{u\in
U}\frac{d_{G}(u)(d_{G}(u)+1)}{2}$.
\end{corollary}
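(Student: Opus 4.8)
The plan is to sandwich $\Sigma^{\prime}(G)$ between a lower bound valid for \emph{every} proper edge-coloring and the value attained by a $U$-sequential $\Delta(G)$-coloring, and then observe that the two coincide.

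First I would invoke Theorem \ref{mytheorem7}: under the degree hypothesis $d_{G}(u)\geq d_{G}(w)$ for every $uw\in E(G)$, the graph $G$ admits a $U$-sequential $\Delta(G)$-coloring $\alpha$. By definition of such a coloring, for every $u\in U$ the edges incident to $u$ receive precisely the colors $1,2,\ldots,d_{G}(u)$, so the total color contributed by the edges at $u$ is $1+2+\cdots+d_{G}(u)=\frac{d_{G}(u)(d_{G}(u)+1)}{2}$. The key structural remark is that, since $G$ is bipartite with parts $U$ and $W$, every edge of $G$ has exactly one endpoint in $U$; hence summing the previous quantity over all $u\in U$ counts each edge of $G$ exactly once, giving $\Sigma^{\prime}(G,\alpha)=\sum_{u\in U}\frac{d_{G}(u)(d_{G}(u)+1)}{2}$.

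Next I would establish the matching lower bound. Let $\beta$ be an arbitrary proper edge-coloring of $G$. For each fixed $u\in U$, the $d_{G}(u)$ edges incident to $u$ receive pairwise distinct natural numbers, so the sum of their colors is at least $1+2+\cdots+d_{G}(u)=\frac{d_{G}(u)(d_{G}(u)+1)}{2}$. Summing over $u\in U$ and again using that bipartiteness makes each edge contribute to exactly one such sum, we get $\Sigma^{\prime}(G,\beta)\geq\sum_{u\in U}\frac{d_{G}(u)(d_{G}(u)+1)}{2}$. As $\beta$ was arbitrary, $\Sigma^{\prime}(G)\geq\sum_{u\in U}\frac{d_{G}(u)(d_{G}(u)+1)}{2}$.

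Combining the two inequalities yields $\Sigma^{\prime}(G)=\sum_{u\in U}\frac{d_{G}(u)(d_{G}(u)+1)}{2}$, and since the $U$-sequential coloring $\alpha$ attains this value it is a sum edge-coloring of $G$, as claimed. There is no genuine obstacle in this argument; the only point that needs care is the bipartiteness observation that permits switching freely between ``sum over edges'' and ``sum over vertices of $U$'' without double counting, after which everything reduces to the elementary local bound $1+2+\cdots+k=\frac{k(k+1)}{2}$ applied at each vertex of $U$.
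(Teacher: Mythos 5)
Your argument is correct and matches the paper's intended derivation: the paper deduces the corollary directly from Theorem \ref{mytheorem7} in exactly this way, with the $U$-sequential $\Delta(G)$-coloring attaining the obvious lower bound $\sum_{u\in U}\frac{d_{G}(u)(d_{G}(u)+1)}{2}$, which is valid because bipartiteness ensures each edge is counted at exactly one vertex of $U$. Nothing further is needed.
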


In \cite{b9}, it was shown that the problem of finding the
edge-chromatic sum of bipartite graphs $G$ with $\Delta(G)=3$ is
$NP$-complete. Now we give a short proof of this fact. First we need
the following

Problem 1. \cite{b2,b4,b14}

Instance: A bipartite graph $G=(U\cup W,E)$ with $\Delta(G)=3$.

Question: Is there a $U$-sequential $3$-coloring of $G$?

It was proved the following:

\begin{theorem}
\label{mytheorem8.0}\cite{b2,b14} Problem 1 is $NP$-complete.
\end{theorem}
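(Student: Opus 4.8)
The plan is to prove $NP$-completeness in the usual two steps. Membership in $NP$ is immediate: a $U$-sequential $3$-coloring is itself a linear-size certificate, and checking that it is proper and that every $u\in U$ uses exactly the colors $1,\dots,d_G(u)$ on its incident edges is polynomial. The real content is the $NP$-hardness, which I would establish by a polynomial reduction from \textsc{Positive 1-in-3 Sat} --- where a clause $(x\vee y\vee z)$ counts as satisfied exactly when one of $x,y,z$ is true --- which is $NP$-complete; any of the standard $3$-\textsc{Sat} variants would do as well, at the cost of bulkier gadgets.

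The first step of the reduction is to record the local constraints that Problem~1 makes available. (i) A vertex $u\in U$ with $d_G(u)=1$ forces its unique edge to color $1$. (ii) A vertex $u\in U$ with $d_G(u)=2$ and neighbors $a,b$ forces $\{\alpha(ua),\alpha(ub)\}=\{1,2\}$; such a vertex is therefore a two-state \emph{switch} which in addition forbids color $3$ on both of its edges. (iii) A vertex of degree $3$ --- any vertex of $W$, or a degree-$3$ vertex of $U$, for which the sequentiality condition holds automatically --- imposes nothing beyond properness, so its three incident edges must together realize $\{1,2,3\}$; I will use such vertices as \emph{free connectors}, the key point being that a free connector meeting three ``signal'' edges forces exactly one of them to be colored $3$.

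Out of (i)--(iii) I would then build the customary components, all bipartite and all of maximum degree $3$: a \emph{variable gadget} that fixes a global \textsc{true}/\textsc{false} state and emits several output edges, with \textsc{true} forcing every output edge to color $3$ and \textsc{false} leaving each output edge free to take color $1$ or $2$; \emph{wire} and \emph{fan-out} gadgets that copy this state to every occurrence of the variable and whose structure and length can be chosen so as to keep the whole graph bipartite; and a \emph{clause gadget} for \textsc{Positive 1-in-3 Sat}, which is essentially a single free connector $t\in W$ joined to the three literal wires, so that the coloring can be completed around $t$ precisely when exactly one incoming wire carries color $3$, i.e.\ exactly one literal is \textsc{true}. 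The disjoint union of the variable and clause gadgets joined by the wires then yields, in polynomial time, a bipartite graph $G=(U\cup W,E)$ with $\Delta(G)=3$ that admits a $U$-sequential $3$-coloring if and only if the formula is satisfiable, and I would verify both directions of this equivalence, which is routine once the gadgets are fixed.

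The step I expect to be the real obstacle is precisely the explicit design of these gadgets, together with the case analysis showing that each one imposes \emph{exactly} the intended logical constraint and no parasitic one: for instance that a \textsc{false} wire can genuinely present either color $1$ or color $2$ at each of its ends --- otherwise two \textsc{false} wires entering a common clause connector could not be colored simultaneously --- that a switch cannot be circumvented by an unforeseen appearance of color $3$, and that gluing the bipartite pieces never creates an odd cycle. Once these properties are checked, the overall equivalence follows with no further difficulty.
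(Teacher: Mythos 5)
The paper does not prove this theorem at all: it is quoted as a known result from \cite{b2,b14}, so there is no internal proof to compare against, and your task would be to supply a complete argument. What you give is a plan, not a proof, and the gap is exactly where you say you expect it: the variable, fan-out/wire and clause gadgets are never constructed, and every substantive claim about them is asserted rather than verified. The membership in $NP$ and the local observations (i)--(iii) are fine but essentially trivial; the whole content of the theorem is the existence of bipartite, maximum-degree-$3$ gadgets with the stated behavior, and that existence is not obvious. In particular, your variable gadget is required to force \emph{all} of its output edges to color $3$ in the \textsc{true} state, while any single degree-$3$ vertex can place color $3$ on only one incident edge (and degree-$2$ vertices of $U$ exclude color $3$ altogether), so ``fan-out'' of the signal needs a genuine amplification structure whose freedom from parasitic colorings must be checked; symmetrically, in the \textsc{false} state each output must remain \emph{independently} free to show $1$ or $2$, since the clause connector $t\in W$ (which, with only three colors available, must see exactly $\{1,2,3\}$) forces the two false wires entering it to present $1$ and $2$ in some order. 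You flag these points yourself, but flagging them does not discharge them: without explicit gadgets and the case analysis proving the ``if and only if'' of the reduction, the argument establishes nothing beyond a reasonable strategy.

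So the proposal is not wrong in approach --- a reduction from a restricted satisfiability problem is a standard and plausible route, and the constraint inventory you extract from $U$-sequentiality is the right raw material --- but as written it has a genuine gap: the reduction itself is missing. To turn it into a proof you would have to exhibit the gadgets concretely, verify that each admits precisely the intended set of partial colorings on its boundary edges, check that the assembled graph is bipartite with $\Delta(G)=3$ (your gluing of pieces across the bipartition classes needs care, since wires made of degree-$2$ vertices of $U$ cannot carry color $3$ and hence cannot serve as the signal-carrying edges), and prove both directions of the equivalence with the chosen satisfiability variant. Alternatively, you could follow the cited sources \cite{b2,b14} and reproduce their reduction, but some explicit construction must appear; at present none does.
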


Now let us consider the following

Problem 2.

Instance: A bipartite graph $G=(U\cup W,E)$ with $\Delta(G)=3$.

Question: Is $\Sigma^{\prime}(G)=\underset{i=1}{\overset{3}{\sum
}}i\cdot\left\vert U_{\geq i}\right\vert$?

\begin{theorem}
\label{mytheorem8} Problem 2 is $NP$-complete.
\end{theorem}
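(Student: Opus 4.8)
The plan is to show $NP$-completeness of Problem 2 by a reduction from Problem 1, which is $NP$-complete by Theorem \ref{mytheorem8.0}. Membership in $NP$ is immediate: given a proper $3$-coloring of $G$ one can compute its weight in polynomial time and compare it with the fixed number $\sum_{i=1}^{3} i\cdot|U_{\geq i}|$, so a witnessing coloring serves as a polynomial certificate. For $NP$-hardness, I would start from an arbitrary instance $G=(U\cup W,E)$ of Problem 1, i.e. a bipartite graph with $\Delta(G)=3$, and ask whether it admits a $U$-sequential $3$-coloring.

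The key observation is a lower-bound identity. In any proper edge-coloring of a bipartite graph $G$ with $\Delta(G)=3$, the colors at a vertex $v$ form a set of $d_G(v)$ distinct positive integers, whose sum is at least $1+2+\cdots+d_G(v)=\frac{d_G(v)(d_G(v)+1)}{2}$. Summing over all vertices and dividing by $2$ (each edge counted at its two endpoints) gives
\begin{center}
$\Sigma^{\prime}(G)\geq \frac{1}{2}\sum_{v\in V(G)}\frac{d_G(v)(d_G(v)+1)}{2}$.
\end{center}
I would want to rewrite the right-hand side so that it coincides with $\sum_{i=1}^{3} i\cdot|U_{\geq i}|$ exactly when the graph is ``balanced'' in the relevant sense. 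Note $\frac{d(d+1)}{2}=\sum_{i=1}^{d} i = \sum_{i\geq 1} i\cdot[d\geq i]$, so $\sum_{v}\frac{d_G(v)(d_G(v)+1)}{2}=\sum_{i=1}^{3} i\cdot|V_{\geq i}|$. Thus the displayed bound reads $\Sigma^{\prime}(G)\geq \frac{1}{2}\sum_{i=1}^{3} i\cdot|V_{\geq i}|$. For this to match $\sum_{i=1}^{3} i\cdot|U_{\geq i}|$ we need $\sum_{i=1}^{3} i\cdot|W_{\geq i}| = \sum_{i=1}^{3} i\cdot|U_{\geq i}|$, i.e. the $U$-side and $W$-side have equal ``degree-sum-of-triangular-numbers.'' So the reduction should not be the identity map but should first modify $G$ on the $W$-side — or rather preprocess the instance — so that (a) a $U$-sequential $3$-coloring of the modified graph exists if and only if one exists for $G$, and (b) in the modified graph the two sides are balanced in the above sense, forcing the equality $\Sigma^{\prime}=\sum_{i=1}^{3} i\cdot|U_{\geq i}|$ to be equivalent to every vertex of $U$ receiving colors $\{1,\dots,d_G(u)\}$ and every vertex of $W$ receiving colors $\{1,\dots,d_G(w)\}$ — which, combined, is exactly a $U$-sequential $3$-coloring.

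Concretely, I would attach to each vertex $w\in W$ of degree $<3$ a small bipartite gadget (a path or a few pendant vertices, with the new ``outer'' vertices placed appropriately in $U$ and $W$) that raises $d(w)$ to $3$ without creating any new constraint that a $U$-sequential coloring could not satisfy, and whose own vertices contribute to both sides in a way that restores the balance $\sum i\cdot|W_{\geq i}| = \sum i\cdot|U_{\geq i}|$. One must check: the gadget keeps the graph bipartite with $\Delta=3$; the enlarged graph $G'$ has a $U'$-sequential $3$-coloring iff $G$ has a $U$-sequential $3$-coloring; and for $G'$, $\Sigma^{\prime}(G')=\sum_{i=1}^{3} i\cdot|U'_{\geq i}|$ holds iff that $U'$-sequential coloring exists (using the lower bound above together with the fact, from Corollary \ref{mycorollary3}-type reasoning, that equality in the bound forces the missing color at each low-degree vertex to be consistent, hence each endpoint sees exactly $\{1,\dots,d\}$). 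Finally the target value $\sum_{i=1}^{3} i\cdot|U'_{\geq i}|$ is computable in polynomial time from $G'$, so the whole construction is a polynomial reduction.

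The main obstacle I anticipate is the gadget design: I need a fixed-size bipartite attachment that simultaneously pads each deficient vertex up to degree $3$, preserves the ``$U$-sequential colorable'' property in both directions of the equivalence, and exactly rebalances the two sides' triangular-degree sums. Getting all three conditions at once — especially ensuring the gadget never destroys a coloring that existed before and never enables one that did not — is the delicate part; everything else (the lower-bound identity, $NP$ membership, polynomiality) is routine. One convenient route, if a perfectly balancing gadget is awkward, is to additionally pad the $U$-side with inert components (e.g. disjoint copies of $K_{1,3}$ centered in $U$) purely to fix up the numeric balance, since such components are trivially $U$-sequentially colorable and contribute a known amount to both $\Sigma^{\prime}$ and to $\sum_{i=1}^{3} i\cdot|U_{\geq i}|$.
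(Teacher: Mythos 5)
Your plan has a genuine gap, and it stems from using the wrong counting. You bound $\Sigma^{\prime}(G)$ by summing the triangular bound over \emph{all} vertices and dividing by $2$, which gives $\Sigma^{\prime}(G)\geq \frac{1}{2}\sum_{i=1}^{3} i\cdot\vert V_{\geq i}\vert$; since this does not match the target $\sum_{i=1}^{3} i\cdot\vert U_{\geq i}\vert$, you conclude that the instance must be modified by degree-padding and balancing gadgets. But the paper's reduction is simply the identity map, and the observation that makes it work is the one you missed: in a bipartite graph every edge has exactly one endpoint in $U$, so summing the colors seen at the $U$-vertices counts each edge exactly once. Hence for \emph{every} proper edge-coloring, $\Sigma^{\prime}(G,\alpha)=\sum_{u\in U}\sum_{c\in S(u,\alpha)}c\geq\sum_{u\in U}\frac{d_{G}(u)(d_{G}(u)+1)}{2}=\sum_{i=1}^{3} i\cdot\vert U_{\geq i}\vert$, with no factor $\frac{1}{2}$ and no balance condition. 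A $U$-sequential $3$-coloring attains this bound, and conversely (using Theorems \ref{mytheorem0} and \ref{mytheorem3} to take an optimal coloring with colors $1,2,3$ only, so that every vertex of $U_{3}$ automatically sees $\{1,2,3\}$) equality forces each vertex of $U_{2}$ to see $\{1,2\}$ and each vertex of $U_{1}$ to see $\{1\}$, i.e.\ the optimal coloring is $U$-sequential. That settles both directions with the original graph.

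Beyond being unnecessary, your gadget route is also left essentially unproven and is harder than the original problem. After balancing the two sides, equality in your bound $\Sigma^{\prime}(G^{\prime})=\frac{1}{2}\sum_{i}i\cdot\vert V_{\geq i}\vert$ forces \emph{every} vertex of $G^{\prime}$, on both sides, to see an initial segment of colors, i.e.\ a $V(G^{\prime})$-sequential coloring. So your construction would need the property that $G$ has a $U$-sequential $3$-coloring if and only if $G^{\prime}$ has a $V(G^{\prime})$-sequential $3$-coloring — in effect a reduction toward Problem 3, whose difficulty is precisely why it is a separate $NP$-complete problem in the paper. You acknowledge that the gadget satisfying all three of your requirements is the ``delicate part'' and never exhibit it, so the argument as written does not constitute a proof; with the one-sided edge count above, none of that machinery is needed.
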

\begin{proof} Clearly, Problem 2 belongs to $NP$. For the proof
of the $NP$-completeness, we show a reduction from Problem 1 to
Problem 2. We prove that a bipartite graph $G=(U\cup W,E)$ with
$\Delta(G)=3$ admits a $U$-sequential $3$-coloring if and only if
$\Sigma^{\prime}(G)=\underset{i=1}{\overset{3}{\sum
}}i\cdot\left\vert U_{\geq i}\right\vert$. Let $G=(U\cup W,E)$ be a
bipartite graph with $\Delta(G)=3$ and $\alpha$ be a $U$-sequential
$3$-coloring of $G$. In this case the colors $1,2,3$ appear on the
edges incident to each vertex $u\in U_{3}$, the colors $1,2$ appear
on the edges incident to each vertex $u\in U_{2}$ and the color $1$
appears on the pendant edges incident to each vertex $u\in U_{1}$.
Hence, $\Sigma^{\prime}(G,\alpha)=\underset{i=1}{\overset{3}{\sum
}}i\cdot\left\vert U_{\geq i}\right\vert$. On the other hand,
clearly, $\Sigma^{\prime}(G)\geq \underset{i=1}{\overset{3}{\sum
}}i\cdot\left\vert U_{\geq i}\right\vert$, thus
$\Sigma^{\prime}(G)=\underset{i=1}{\overset{3}{\sum
}}i\cdot\left\vert U_{\geq i}\right\vert$.

Now suppose that $\Sigma^{\prime}(G)=\underset{i=1}{\overset{3}{\sum
}}i\cdot\left\vert U_{\geq i}\right\vert$. By Theorems
\ref{mytheorem0} and \ref{mytheorem3}, there exists a proper
$3$-coloring $\beta$ of a bipartite graph $G$ with $\Delta(G)=3$.
This implies that the colors $1,2,3$ appear on the edges incident to
each vertex $u\in U_{3}$. If the color $3$ appears on the edges
incident to some vertices $u\in U_{2}$ or the colors $2$ or $3$
appear on the pendant edges incident to some vertices $u\in U_{1}$,
then it is easy to see that
$\Sigma^{\prime}(G,\beta)>\underset{i=1}{\overset{3}{\sum
}}i\cdot\left\vert U_{\geq i}\right\vert$. Hence, $\beta$ is a
$U$-sequential $3$-coloring of $G$. ~$\square$
\end{proof}

Now we prove that the problem of finding the edge-chromatic sum of
bipartite graphs $G$ with $\Delta(G)=3$ and with additional
conditions is $NP$-complete, too. We need the following

Problem 3. \cite{b2,b14}

Instance: A bipartite graph $G=(U\cup W,E)$ with $\Delta(G)=3$ and
$\vert U_{i}\vert=\vert W_{i}\vert$ for $i=1,2,3$.

Question: Is there a $V(G)$-sequential $3$-coloring of $G$?

It was proved the following:

\begin{theorem}
\label{mytheorem9.0}\cite{b2,b14} Problem 3 is $NP$-complete.
\end{theorem}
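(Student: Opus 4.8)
That Problem~3 lies in $NP$ is immediate: an alleged $V(G)$-sequential $3$-coloring is a map $E(G)\to\{1,2,3\}$, and properness together with the requirement that each vertex $v$ see exactly the colors $1,\ldots,d_G(v)$ is checked in linear time. The hypothesis $|U_i|=|W_i|$ is, incidentally, not a genuine extra restriction on instances: in any $V(G)$-sequential $3$-coloring the edges of color $j$ form a perfect matching of the subgraph induced by the vertices of degree $\ge j$, so a bipartite graph with $\Delta\le 3$ can have such a coloring only if $|U_3|=|W_3|$, $|U_2|+|U_3|=|W_2|+|W_3|$ and $|U_1|+|U_2|+|U_3|=|W_1|+|W_2|+|W_3|$, that is, only if $|U_i|=|W_i|$ for $i=1,2,3$.

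For $NP$-hardness I would reduce from Problem~1, which is $NP$-complete by Theorem~\ref{mytheorem8.0}. From an instance $G=(U\cup W,E)$ with $\Delta(G)=3$ one wants to build, in polynomial time, a bipartite $G'=(U'\cup W',E')$ with $\Delta(G')=3$ and $|U'_i|=|W'_i|$ such that $G'$ has a $V(G')$-sequential $3$-coloring iff $G$ has a $U$-sequential $3$-coloring. The guiding idea is to leave the vertices of $U$ with their current degrees — there $V(G')$-sequentiality says exactly what $U$-sequentiality said — while altering the structure around the vertices of $W$ so that a proper $3$-coloring of $G'$ is automatically ``sequential'' at their images; disjoint $V$-sequentially $3$-colorable padding components (a single edge, a $6$-cycle, the path $P_4$, the cube $Q_3$, and so on, each of them automatically satisfying $|U_i|=|W_i|$) are then adjoined so that the degree profiles of the two sides of $G'$ coincide and $G'$ becomes a legitimate instance of Problem~3. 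In one direction a $V(G')$-sequential coloring of $G'$ restricts to a $U$-sequential coloring of $G$; in the other, a $U$-sequential coloring of $G$ is extended over the modifications and the padding; polynomiality is clear.

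The crux, and where essentially all the work lies, is making the vertices of $W$ ``free.'' This cannot be done by a purely local gadget: if a finite graph $H$ is grafted onto one or two former $W$-vertices through some connecting edges, then a parity count on $|V(H)|$ — forced because the color-$1$ edges of a $V(G')$-sequential coloring saturate every vertex of $H$ — pins down how many of the connecting edges carry color~$1$, so the graft unavoidably constrains the original edges at that vertex and the reduction ceases to be faithful. Hence the device that frees the $W$-vertices has to be wired in globally (for instance by pooling the missing-color slots of many former $W$-vertices through one shared auxiliary subgraph whose own sequentiality budget forces nothing about any individual slot), or else one drops Problem~1 and reduces directly from $3$-SAT, encoding a Boolean variable in the six ways of coloring the three edges at a degree-$3$ vertex (and the two ways at a degree-$2$ vertex) and assembling the corresponding variable, clause, and wire gadgets subject to bipartiteness, $\Delta=3$, and the side-degree balance. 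Showing that such a family of gadgets does exactly what is required — and nothing more — is the substance of the theorem, and is where I would expect the difficulty to concentrate.
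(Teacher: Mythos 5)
The statement you were asked to prove is one the paper itself does not prove: Theorem~\ref{mytheorem9.0} is simply quoted from \cite{b2,b14}, so your attempt has to stand on its own as a complete argument, and it does not. The easy parts are fine: membership in $NP$ is checked correctly, and your observation that the condition $\vert U_i\vert=\vert W_i\vert$ for $i=1,2,3$ is automatically forced by the existence of a $V(G)$-sequential $3$-coloring (the color-$j$ edges form a perfect matching on the vertices of degree at least $j$) is correct and worth keeping. But the entire content of the theorem is the $NP$-hardness reduction, and there you exhibit no construction at all. You describe two possible strategies --- padding a reduction from Problem~1 with some globally wired device that ``frees'' the former $W$-vertices, or reducing directly from $3$-SAT via variable, clause and wire gadgets respecting bipartiteness, $\Delta(G)=3$ and the side-degree balance --- you even argue (via the parity count) that the naive local-gadget version of the first strategy cannot work, and then you explicitly defer the design and verification of the gadgets, saying this is where the difficulty concentrates. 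A reduction whose gadgets are neither specified nor verified is not a proof: nothing in your text establishes the key equivalence that the constructed graph $G'$ admits a $V(G')$-sequential $3$-coloring if and only if the source instance is a yes-instance.

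So the gap is concrete: the hardness direction is missing, and it is precisely the part that cannot be waved at, since for it one must actually build a bipartite, degree-balanced, maximum-degree-$3$ instance and prove both directions of the equivalence. To close it you would either have to reproduce the construction of Asratian--Kamalian and Kamalian \cite{b2,b14}, on which the present paper relies by citation, or carry out in full one of the gadget-based reductions you sketch, including the verification that every $V(G')$-sequential $3$-coloring of $G'$ projects to a solution of the original instance and, conversely, that every solution extends over the added structure.
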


Now let us consider the following

Problem 4.

Instance: A bipartite graph $G=(U\cup W,E)$ with $\Delta(G)=3$ and
$\vert U_{i}\vert=\vert W_{i}\vert$ for $i=1,2,3$.

Question: Is
$\Sigma^{\prime}(G)=\frac{1}{2}\underset{i=1}{\overset{3}{\sum
}}i\cdot\left\vert V_{\geq i}\right\vert$?

\begin{theorem}
\label{mytheorem9} Problem 4 is $NP$-complete.
\end{theorem}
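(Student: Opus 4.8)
The plan is to follow the pattern of the proof of Theorem~\ref{mytheorem8}, reducing from Problem~3 instead of Problem~1. Membership in $NP$ is immediate, since a candidate sum edge-coloring is checkable in polynomial time and the quantity $\frac{1}{2}\sum_{i=1}^{3} i\cdot|V_{\geq i}|$ is read off directly from $G$. For hardness I would use the \emph{identity} reduction: an instance $G=(U\cup W,E)$ of Problem~3 (so $\Delta(G)=3$ and $|U_i|=|W_i|$ for $i=1,2,3$) is also an instance of Problem~4, and I claim $G$ admits a $V(G)$-sequential $3$-coloring if and only if $\Sigma^{\prime}(G)=\frac{1}{2}\sum_{i=1}^{3} i\cdot|V_{\geq i}|$.

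The backbone of the argument is a double-counting lower bound. For any proper edge-coloring $\alpha$ of $G$, the colors at a vertex $v$ are $d_G(v)$ distinct positive integers, hence sum to at least $1+2+\cdots+d_G(v)=\frac{d_G(v)(d_G(v)+1)}{2}$; summing over all $v$ counts each edge twice, so
\[
2\,\Sigma^{\prime}(G,\alpha)\;\geq\;\sum_{v\in V(G)}\sum_{i=1}^{d_G(v)} i\;=\;\sum_{i=1}^{3} i\cdot\bigl|V_{\geq i}\bigr| ,
\]
giving $\Sigma^{\prime}(G)\geq \frac{1}{2}\sum_{i=1}^{3} i\cdot|V_{\geq i}|$ unconditionally. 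For the forward direction of the claim, a $V(G)$-sequential $3$-coloring makes every inequality above an equality, so it realizes $\frac{1}{2}\sum_{i=1}^{3} i\cdot|V_{\geq i}|$ and hence equals $\Sigma^{\prime}(G)$. For the converse, I would take a sum edge-coloring $\beta$ of $G$ (one exists, and by Theorems~\ref{mytheorem0} and~\ref{mytheorem3} may be chosen to use only colors $1,2,3$); the hypothesis forces equality in the bound, so the colors at every $v$ sum to exactly $\frac{d_G(v)(d_G(v)+1)}{2}$, and since $d_G(v)\le 3$ this pins the color set at $v$ down to $\{1,\dots,d_G(v)\}$. Thus $\beta$ is $V(G)$-sequential.

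Combining the claim with the $NP$-completeness of Problem~3 (Theorem~\ref{mytheorem9.0}) finishes the proof, the reduction being trivially polynomial. The only delicate point is the rigidity step in the converse: one must check that global equality propagates to \emph{every} vertex and that, at a degree-$3$ vertex, three distinct positive integers summing to $6$ can only be $1,2,3$ (at degree $1$ and $2$ this is obvious). This elementary arithmetic observation is really the crux, but it is routine, so I do not anticipate a genuine obstacle.
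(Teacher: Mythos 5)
Your proposal is correct and follows essentially the same route as the paper: the identity reduction from Problem~3, the observation that a $V(G)$-sequential $3$-coloring attains the lower bound $\frac{1}{2}\sum_{i=1}^{3} i\cdot|V_{\geq i}|$, and the rigidity argument showing that an optimal coloring attaining this value must be $V(G)$-sequential (your explicit double-counting and the arithmetic ``distinct colors summing to $d(d+1)/2$ must be $1,\ldots,d$'' is just a slightly more detailed phrasing of the paper's case check). No gap.
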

\begin{proof} Clearly, Problem 4 belongs to $NP$. For the proof
of the $NP$-completeness, we show a reduction from Problem 3 to
Problem 4. We prove that a bipartite graph $G=(U\cup W,E)$ with
$\Delta(G)=3$ and $\vert U_{i}\vert=\vert W_{i}\vert$ for $i=1,2,3$,
admits a $V(G)$-sequential $3$-coloring if and only if
$\Sigma^{\prime}(G)=\frac{1}{2}\underset{i=1}{\overset{3}{\sum
}}i\cdot\left\vert V_{\geq i}\right\vert$. Let $\alpha$ be a
$V(G)$-sequential $3$-coloring of $G$. In this case the colors
$1,2,3$ appear on the edges incident to each vertex $v\in V(G)$ with
$d_{G}(v)=3$, the colors $1,2$ appear on the edges incident to each
vertex $v\in V(G)$ with $d_{G}(v)=2$ and the color $1$ appears on
the pendant edges incident to each vertex $v\in V(G)$ with
$d_{G}(v)=1$. Hence,
$\Sigma^{\prime}(G,\alpha)=\frac{1}{2}\underset{i=1}{\overset{3}{\sum
}}i\cdot\left\vert V_{\geq i}\right\vert$. On the other hand,
clearly, $\Sigma^{\prime}(G)\geq
\frac{1}{2}\underset{i=1}{\overset{3}{\sum }}i\cdot\left\vert
V_{\geq i}\right\vert$, thus
$\Sigma^{\prime}(G)=\frac{1}{2}\underset{i=1}{\overset{3}{\sum
}}i\cdot\left\vert V_{\geq i}\right\vert$.

Now suppose that
$\Sigma^{\prime}(G)=\frac{1}{2}\underset{i=1}{\overset{3}{\sum
}}i\cdot\left\vert V_{\geq i}\right\vert$. By Theorems
\ref{mytheorem0} and \ref{mytheorem3}, there exists a proper
$3$-coloring $\beta$ of a bipartite graph $G$ with $\Delta(G)=3$ and
$\vert U_{i}\vert=\vert W_{i}\vert$ for $i=1,2,3$. This implies that
the colors $1,2,3$ appear on the edges incident to each vertex $v\in
V(G)$ with $d_{G}(v)=3$. If the color $3$ appears on the edges
incident to some vertices $v\in V(G)$ with $d_{G}(v)=2$ or the
colors $2$ or $3$ appear on the pendant edges incident to some
vertices $v\in V(G)$ with $d_{G}(v)=1$, then it is easy to see that
$\Sigma^{\prime}(G,\beta)>\frac{1}{2}\underset{i=1}{\overset{3}{\sum
}}i\cdot\left\vert V_{\geq i}\right\vert$. Hence, $\beta$ is a
$V(G)$-sequential $3$-coloring of $G$. ~$\square$
\end{proof}

In \cite{b19}, it was proved that the problem of finding the
edge-chromatic sum of bipartite graphs $G$ with $\Delta(G)=3$
remains $NP$-hard even for planar bipartite graphs.\\

\section{Edge-chromatic sums of split graphs}\

In this section we consider the problem of finding the
edge-chromatic sum of split graphs. A split graph is a graph whose
vertices can be partitioned into a clique $C$ and an independent set
$I$. Let $G=(C\cup I,E)$ be a split graph, where
$C=\{u_{1},u_{2},\ldots,u_{n}\}$ is clique and
$I=\{v_{1},v_{2},\ldots,v_{m}\}$ is independent set. Define a number
$\Delta_{I}$ as follows: $\Delta_{I}=\max_{1\leq j\leq
m}d_{G}(v_{j})$. Define subgraphs $H$ and $H^{\prime}$ of a graph
$G$ as follows:
\begin{center}
$H=(C\cup I, E(G)\setminus E(G[C]))$ and $H^{\prime}=G[C]$.
\end{center}
Clearly, $H$ is a bipartite graph with a bipartition $(C,I)$, and
$d_{H}(u_{i})=d_{G}(u_{i})-n+1$ for $i=1,2,\ldots,n$,
$d_{H}(v_{j})=d_{G}(v_{j})$ for $j=1,2,\ldots,m$.

\begin{theorem}
\label{mytheorem10} Let $G=(C\cup I,E)$ be a split graph, where
$C=\{u_{1},u_{2},\ldots,u_{n}\}$ is clique and
$I=\{v_{1},v_{2},\ldots,v_{m}\}$ is independent set. If
$d_{G}(u_{i})-d_{G}(v_{j})\geq n-1$ for every $u_{i}v_{j}\in E(G)$,
then:
\begin{description}
\item[(1)] if $n$ is even, then
\begin{center}
$\Sigma^{\prime}(G)\leq {\min
}\left\{\sum_{i=1}^{n}\frac{\left(d_{G}(u_{i})-n+1\right)\left(d_{G}(u_{i})-n+2\right)}{2}+\frac{\left(2\Delta(G)-n+2\right)n(n-1)}{4},
\Sigma^{\prime}(K_{n})+\sum_{i=1}^{n}\frac{\left(d_{G}(u_{i})-n+1\right)\left(d_{G}(u_{i})+n\right)}{2}\right\}$;
\end{center}

\item[(2)] if $n$ is odd, then
\begin{center}
$\Sigma^{\prime}(G)\leq {\min}\left\{
\sum_{i=1}^{n}\frac{\left(d_{G}(u_{i})-n+1\right)\left(d_{G}(u_{i})-n+2\right)}{2}+\frac{\left(2\Delta(G)-n+3\right)n(n-1)}{4},
\Sigma^{\prime}(K_{n})+\sum_{i=1}^{n}\frac{\left(d_{G}(u_{i})-n+1\right)\left(d_{G}(u_{i})+n+2\right)}{2}\right\}$.
\end{center}
\end{description}
\end{theorem}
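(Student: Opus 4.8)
The plan is to color the bipartite part $H$ and the clique part $H'=G[C]=K_n$ separately, each starting from color $1$, and then to superpose the two colorings after shifting one of them upward so that at every clique vertex $u_i$ the two sets of colors used are disjoint. There are two natural superpositions — put $H$ ``below'' $K_n$, or put $K_n$ ``below'' $H$ — and these yield the two quantities appearing inside the minimum.

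First I would record the two ingredient colorings. The hypothesis $d_G(u_i)-d_G(v_j)\ge n-1$ says exactly that $d_H(u_i)=d_G(u_i)-n+1\ge d_G(v_j)=d_H(v_j)$ for every edge $u_iv_j$ of $H$; hence by Theorem~\ref{mytheorem7} the bipartite graph $H$ admits a $C$-sequential $\Delta(H)$-coloring $\varphi$. Since the maximum degree of $G$ is attained on $C$ (for a vertex $v_j\in I$ with a neighbour $u_i$ we have $d_G(v_j)\le d_G(u_i)-n+1<d_G(u_i)$, and isolated vertices of $I$ are irrelevant), $\Delta(H)=\Delta(G)-n+1$, so $\varphi$ uses only the colors $1,\dots,\Delta(G)-n+1$, and by Corollary~\ref{mycorollary3} its color sum equals $\sum_{i=1}^n\frac{(d_G(u_i)-n+1)(d_G(u_i)-n+2)}{2}$. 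For $K_n$, Theorems~\ref{mytheorem2} and~\ref{mytheorem5} provide a sum edge-coloring $\psi$ using the colors $1,\dots,\chi^{\prime}(K_n)$ (with $\chi^{\prime}(K_n)=n-1$ if $n$ is even, $n$ if $n$ is odd) of total color sum $\Sigma^{\prime}(K_n)$; note every $u_i$ has degree $n-1$ in $K_n$, so $\psi$ uses $n-1$ of these colors at $u_i$.

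Next I would superpose. In superposition (a) I keep $\varphi$ on the edges of $H$ and recolor the edges of $K_n$ by $\psi+\Delta(H)$; at each $u_i$ the $H$-edges use colors in $\{1,\dots,\Delta(H)\}$ and the $K_n$-edges colors in $\{\Delta(H)+1,\dots\}$, and at vertices of $I$ only the proper coloring $\varphi$ appears, so the combined coloring of $G$ is proper, with color sum $\sum_{i=1}^n\frac{(d_G(u_i)-n+1)(d_G(u_i)-n+2)}{2}+\Sigma^{\prime}(K_n)+\Delta(H)\binom{n}{2}$. In superposition (b) I keep $\psi$ on $K_n$ and recolor $H$ by $\varphi+\chi^{\prime}(K_n)$; now at each $u_i$ the $K_n$-edges use colors in $\{1,\dots,\chi^{\prime}(K_n)\}$ and the $H$-edges colors in $\{\chi^{\prime}(K_n)+1,\dots\}$, again a proper coloring of $G$, with color sum $\Sigma^{\prime}(K_n)+\sum_{i=1}^n\frac{(d_G(u_i)-n+1)(d_G(u_i)-n+2)}{2}+\chi^{\prime}(K_n)\vert E(H)\vert$, where $\vert E(H)\vert=\sum_{i=1}^n(d_G(u_i)-n+1)$ since every edge of $H$ has exactly one end in $C$.

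Finally, since $\Sigma^{\prime}(G)$ is at most the color sum of any proper edge-coloring, $\Sigma^{\prime}(G)$ is bounded above by the minimum of the quantities from (a) and (b). It then remains to substitute $\Delta(H)=\Delta(G)-n+1$, $\binom{n}{2}=\frac{n(n-1)}{2}$, the value of $\chi^{\prime}(K_n)$, the values of $\Sigma^{\prime}(K_n)$ from Theorem~\ref{mytheorem5}, and — for the second expression — the identity $\frac{a(a+1)}{2}+c\,a=\frac{a(a+2c+1)}{2}$ with $a=d_G(u_i)-n+1$ and $c=\chi^{\prime}(K_n)$, splitting into the cases $n$ even and $n$ odd; this routine algebra turns (a) into the first term and (b) into the second term of each displayed minimum. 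The only genuine points of care are that the two superposed colorings really are proper — immediate from $d_H(u_i)\le\Delta(H)$ together with the choice of shifts — and the identification $\Delta(H)=\Delta(G)-n+1$; the remainder is bookkeeping, so I expect no serious obstacle.
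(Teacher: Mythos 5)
Your proposal is correct and follows essentially the same route as the paper: decompose $G$ into the bipartite graph $H$ and the clique $H'=K_n$, take the $C$-sequential $\Delta(H)$-coloring from Theorem~\ref{mytheorem7}/Corollary~\ref{mycorollary3} together with an optimal coloring of $K_n$, and superpose them with a shift in the two possible orders, splitting on the parity of $n$. The only cosmetic difference is that you express the clique's contribution in the first bound as $\Sigma^{\prime}(K_n)+\Delta(H)\binom{n}{2}$ and verify algebraically (using $\Delta(H)=\Delta(G)-n+1$, which you justify) that it equals the paper's term, whereas the paper computes directly with the shifted color interval.
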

\begin{proof}
For the proof, we are going to construct edge-colorings that
satisfies the specified conditions.

Since $d_{G}(u_{i})-d_{G}(v_{j})\geq n-1$ for every $u_{i}v_{j}\in
E(G)$, we have $d_{H}(u_{i})\geq d_{H}(v_{j})$ for each
$u_{i}v_{j}\in E(H)$. By Theorem \ref{mytheorem7}, there exists a
$C$-sequential $\Delta(H)$-coloring $\alpha$ of the graph $H$ and,
by Corollary \ref{mycorollary3}, we obtain

\begin{center}
$\Sigma^{\prime}(H)=\Sigma^{\prime}(H,\alpha)=\sum_{i=1}^{n}\frac{d_{H}(u_{i})\left(d_{H}(u_{i})+1\right)}{2}$.
\end{center}

Now we consider two cases.

Case 1: $n$ is even.

In this case, by Theorem \ref{mytheorem2}, we have $\chi^{\prime
}(H^{\prime})=n-1$. Let $\beta$ be a proper edge-coloring of a graph
$H^{\prime}$ with colors $\Delta(G)-n+2,\ldots,\Delta(G)$. Clearly,
for each vertex $u_{i}$, $i=1,2,\ldots,n$, the set of colors
appearing on edges incident to $u_{i}$ in $H^{\prime}$ is
$[\Delta(G)-n+2,\Delta(G)]$. Thus, we obtain

\begin{center}
$\Sigma^{\prime}(G)\leq
\Sigma^{\prime}(H)+\frac{(2\Delta(G)-n+2)n(n-1)}{4}$.
\end{center}

On the other hand, let $\beta^{\prime}$ be a proper edge-coloring of
a graph $H^{\prime}$ with colors $1,2,\ldots,n-1$. Clearly, for each
vertex $u_{i}$, $i=1,2,\ldots,n$, the set of colors appearing on
edges incident to $u_{i}$ in $H^{\prime}$ is $[1,n-1]$. Next, we
define an edge-coloring $\gamma$ of the graph $H$ as follows: for
every $e\in E(H)$, let $\gamma(e)=\alpha(e)+n-1$. Thus, we obtain

\begin{center}
$\Sigma^{\prime}(G)\leq
\Sigma^{\prime}(K_{n})+\sum_{i=1}^{n}\frac{\left(d_{G}(u_{i})-n+1\right)\left(d_{G}(u_{i})+n\right)}{2}$.
\end{center}

Case 2: $n$ is odd.

In this case, by Theorem \ref{mytheorem2}, we have $\chi^{\prime
}(H^{\prime})=n$. Let $\beta$ be a proper edge-coloring of a graph
$H^{\prime}$ with colors $\Delta(G)-n+2,\ldots,\Delta(G)+1$. Without
loss of generality, we may assume that for each vertex $u_{i}$,
$i=1,2,\ldots,n$, the set of colors appearing on edges incident to
$u_{i}$ in $H^{\prime}$ is
$[\Delta(G)-n+2,\Delta(G)+1]\setminus\{\Delta(G)-n+1+i\}$. Thus, we
obtain

\begin{center}
$\Sigma^{\prime}(G)\leq
\Sigma^{\prime}(H)+\frac{(2\Delta(G)-n+3)n(n-1)}{4}$.
\end{center}

On the other hand, let $\beta^{\prime}$ be a proper edge-coloring of
a graph $H^{\prime}$ with colors $1,2,\ldots,n$. Without loss of
generality, we may assume that for each vertex $u_{i}$,
$i=1,2,\ldots,n$, the set of colors appearing on edges incident to
$u_{i}$ in $H^{\prime}$ is $[1,n]\setminus\{i\}$. Next, we define an
edge-coloring $\gamma$ of the graph $H$ as follows: for every $e\in
E(H)$, let $\gamma(e)=\alpha(e)+n$. Thus, we obtain

\begin{center}
$\Sigma^{\prime}(G)\leq
\Sigma^{\prime}(K_{n})+\sum_{i=1}^{n}\frac{\left(d_{G}(u_{i})-n+1\right)\left(d_{G}(u_{i})+n+2\right)}{2}$.
\end{center}
~$\square$
\end{proof}

\begin{theorem}
\label{mytheorem11} Let $G=(C\cup I,E)$ be a split graph, where
$C=\{u_{1},u_{2},\ldots,u_{n}\}$ is clique and
$I=\{v_{1},v_{2},\ldots,v_{m}\}$ is independent set. If
$d_{G}(u_{i})-d_{G}(v_{j})\leq n-1$ for every $u_{i}v_{j}\in E(G)$,
then:
\begin{description}
\item[(1)] if $n$ is even, then
\begin{center}
$\Sigma^{\prime}(G)\leq {\min }\left\{
\sum_{j=1}^{m}\frac{d_{G}(v_{j})\left(d_{G}(v_{j})+1\right)}{2}+\frac{\left(2\Delta_{I}+n\right)n(n-1)}{4},
\Sigma^{\prime}(K_{n})+\sum_{j=1}^{m}\frac{d_{G}(v_{j})\left(d_{G}(v_{j})+2n-1\right)}{2}\right\}$;
\end{center}

\item[(2)] if $n$ is odd, then
\begin{center}
$\Sigma^{\prime}(G)\leq {\min }\left\{
\sum_{j=1}^{m}\frac{d_{G}(v_{j})\left(d_{G}(v_{j})+1\right)}{2}+\frac{\left(2\Delta_{I}+n+1\right)n(n-1)}{4},
\Sigma^{\prime}(K_{n})+\sum_{j=1}^{m}\frac{d_{G}(v_{j})\left(d_{G}(v_{j})+2n+1\right)}{2}\right\}$.
\end{center}
\end{description}
\end{theorem}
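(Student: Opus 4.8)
The plan is to mirror the structure of the proof of Theorem \ref{mytheorem10}, but with the roles of the clique $C$ and the independent set $I$ essentially interchanged in the bipartite part. The hypothesis $d_{G}(u_{i})-d_{G}(v_{j})\leq n-1$ translates, via $d_{H}(u_{i})=d_{G}(u_{i})-n+1$ and $d_{H}(v_{j})=d_{G}(v_{j})$, into $d_{H}(v_{j})\geq d_{H}(u_{i})$ for every edge $u_{i}v_{j}\in E(H)$. So now it is the independent-set side whose degrees dominate. Applying Theorem \ref{mytheorem7} to $H$ with the parts swapped (take $U:=I$, $W:=C$), we get an $I$-sequential $\Delta(H)$-coloring $\alpha$ of $H$, and Corollary \ref{mycorollary3} gives $\Sigma^{\prime}(H)=\Sigma^{\prime}(H,\alpha)=\sum_{j=1}^{m}\frac{d_{G}(v_{j})(d_{G}(v_{j})+1)}{2}$, since $d_{H}(v_{j})=d_{G}(v_{j})$. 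Note $\Delta(H)=\Delta_{I}$ under this hypothesis, because each $v_j$ has all its $G$-edges in $H$ and dominates the clique vertices.

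Next I would handle $H^{\prime}=G[C]=K_{n}$ by a parity split exactly as before. For the first term in each minimum, color $H^{\prime}$ with the \emph{top} block of colors, shifted above $\Delta_{I}$: when $n$ is even, use colors $\Delta_{I}+1,\ldots,\Delta_{I}+n-1$ (Theorem \ref{mytheorem2} says $\chi^{\prime}(K_n)=n-1$), so the clique contributes $\frac{(2\Delta_{I}+n)n(n-1)}{4}$ on top of $\Sigma^{\prime}(H)$; when $n$ is odd, use colors $\Delta_{I}+1,\ldots,\Delta_{I}+n$ with the standard near-$1$-factorization so each $u_i$ misses exactly one, giving the clique contribution $\frac{(2\Delta_{I}+n+1)n(n-1)}{4}$. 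These colorings are compatible with $\alpha$ on $H$ because every color used on the clique exceeds $\Delta_{I}\geq d_H(v_j)\geq$ every color $\alpha$ assigns, so no conflict can arise at a shared vertex $u_i$. For the second term, instead color $K_n$ with the \emph{bottom} block $1,\ldots,n-1$ ($n$ even) or $1,\ldots,n$ ($n$ odd), and shift the bipartite coloring up: set $\gamma(e)=\alpha(e)+n-1$ (even) or $\gamma(e)=\alpha(e)+n$ (odd) for $e\in E(H)$. Summing the contribution of $\gamma$ over the independent-set side, $\sum_j\bigl[\frac{d_G(v_j)(d_G(v_j)+1)}{2}+(n-1)d_G(v_j)\bigr]=\sum_j\frac{d_G(v_j)(d_G(v_j)+2n-1)}{2}$ (even case), and similarly $\sum_j\frac{d_G(v_j)(d_G(v_j)+2n+1)}{2}$ (odd case), while the clique part contributes $\Sigma^{\prime}(K_n)$ by Theorem \ref{mytheorem5}; again the shift guarantees the colors on $E(H)$ and on $E(K_n)$ don't collide at any $u_i$.

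The only genuine point requiring care — and the main obstacle — is verifying that in the odd case the near-$1$-factorization of $K_n$ can be arranged so that the missing color at $u_i$ is chosen consistently with what $\alpha$ does at $u_i$ on the $H$-side, so that $u_i$ still sees distinct colors overall. In Theorem \ref{mytheorem10} this was phrased as "without loss of generality, the set of colors appearing at $u_i$ in $H'$ is $[\,\cdot\,]\setminus\{\cdot\}$"; here the situation is easier, since the blocks used on $E(H)$ and on $E(K_n)$ are disjoint (bottom block versus shifted $\alpha$, or top block versus $\alpha$), so \emph{any} proper edge-coloring of $K_n$ works and no coordination between the two colorings is needed at all. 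Hence the edge-coloring obtained by overlaying the two is automatically proper, and taking the better of the two constructions in each parity yields the claimed minima. ~$\square$
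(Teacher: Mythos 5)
Your proposal is correct and follows essentially the same route as the paper: the same decomposition of $G$ into the bipartite graph $H$ and the clique $H'=K_n$, the same use of Theorem \ref{mytheorem7} and Corollary \ref{mycorollary3} with the independent set as the dominating side, and the same two constructions (clique colored above $\Delta_I$, versus clique on $1,\ldots,n-1$ or $1,\ldots,n$ with $\alpha$ shifted up) split by the parity of $n$. Your added observation that no coordination of missing colors is needed in the odd case is a correct minor simplification of the paper's ``without loss of generality'' step, since in any proper $n$-coloring of $K_n$ ($n$ odd) every color class has exactly $(n-1)/2$ edges, so the clique's contribution is the same regardless of which color each $u_i$ misses.
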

\begin{proof}
For the proof, we are going to construct edge-colorings that
satisfies the specified conditions.

Since $d_{G}(u_{i})-d_{G}(v_{j})\leq n-1$ for every $u_{i}v_{j}\in
E(G)$, we have $d_{H}(u_{i})\leq d_{H}(v_{j})$ for each
$u_{i}v_{j}\in E(H)$. By Theorem \ref{mytheorem7}, there exists an
$I$-sequential $\Delta_{I}$-coloring $\alpha$ of the graph $H$ and,
by Corollary \ref{mycorollary3}, we obtain

\begin{center}
$\Sigma^{\prime}(H)=\Sigma^{\prime}(H,\alpha)=\sum_{j=1}^{m}\frac{d_{H}(v_{j})\left(d_{H}(v_{j})+1\right)}{2}=\sum_{j=1}^{m}\frac{d_{G}(v_{j})\left(d_{G}(v_{j})+1\right)}{2}$.
\end{center}

Now we consider two cases.

Case 1: $n$ is even.

In this case, by Theorem \ref{mytheorem2}, we have $\chi^{\prime
}(H^{\prime})=n-1$. Let $\beta$ be a proper edge-coloring of a graph
$H^{\prime}$ with colors $\Delta_{I}+1,\ldots,\Delta_{I}+n-1$.
Clearly, for each vertex $u_{i}$, $i=1,2,\ldots,n$, the set of
colors appearing on edges incident to $u_{i}$ in $H^{\prime}$ is
$[\Delta_{I}+1,\Delta_{I}+n-1]$. Thus, we obtain

\begin{center}
$\Sigma^{\prime}(G)\leq
\Sigma^{\prime}(H)+\frac{\left(2\Delta_{I}+n\right)n(n-1)}{4}$.
\end{center}

On the other hand, let $\beta^{\prime}$ be a proper edge-coloring of
a graph $H^{\prime}$ with colors $1,2,\ldots,n-1$. Clearly, for each
vertex $u_{i}$, $i=1,2,\ldots,n$, the set of colors appearing on
edges incident to $u_{i}$ in $H^{\prime}$ is $[1,n-1]$. Next, we
define an edge-coloring $\gamma$ of the graph $H$ as follows: for
every $e\in E(H)$, let $\gamma(e)=\alpha(e)+n-1$. Thus, we obtain

\begin{center}
$\Sigma^{\prime}(G)\leq
\Sigma^{\prime}(K_{n})+\sum_{j=1}^{m}\frac{d_{G}(v_{j})\left(d_{G}(v_{j})+2n-1\right)}{2}$.
\end{center}

Case 2: $n$ is odd.

In this case, by Theorem \ref{mytheorem2}, we have $\chi^{\prime
}(H^{\prime})=n$. Let $\beta$ be a proper edge-coloring of a graph
$H^{\prime}$ with colors $\Delta_{I}+1,\ldots,\Delta_{I}+n$. Without
loss of generality, we may assume that for each vertex $u_{i}$,
$i=1,2,\ldots,n$, the set of colors appearing on edges incident to
$u_{i}$ in $H^{\prime}$ is
$[\Delta_{I}+1,\Delta_{I}+n]\setminus\{\Delta_{I}+i\}$. Thus, we
obtain

\begin{center}
$\Sigma^{\prime}(G)\leq
\Sigma^{\prime}(H)+\frac{\left(2\Delta_{I}+n+1\right)n(n-1)}{4}$.
\end{center}

On the other hand, let $\beta^{\prime}$ be a proper edge-coloring of
a graph $H^{\prime}$ with colors $1,2,\ldots,n$. Without loss of
generality, we may assume that for each vertex $u_{i}$,
$i=1,2,\ldots,n$, the set of colors appearing on edges incident to
$u_{i}$ in $H^{\prime}$ is $[1,n]\setminus\{i\}$. Next, we define an
edge-coloring $\gamma$ of the graph $H$ as follows: for every $e\in
E(H)$, let $\gamma(e)=\alpha(e)+n$. Thus, we obtain

\begin{center}
$\Sigma^{\prime}(G)\leq
\Sigma^{\prime}(K_{n})+\sum_{j=1}^{m}\frac{d_{G}(v_{j})\left(d_{G}(v_{j})+2n+1\right)}{2}$.
\end{center}
~$\square$
\end{proof}

\begin{acknowledgement}
We would like to thank both referees for many useful suggestions.
\end{acknowledgement}

\end{document}